\newtheorem{thm}{Theorem}[section]
\newtheorem{lem}[thm]{Lemma}
\newtheorem{pro}[thm]{Proposition}
\newtheorem{cor}[thm]{Corollary}
\theoremstyle{definition}
\theoremstyle{remark}
\newtheorem{rem}[thm]{Remark}
\newcommand{\R}{\mathbb{R}}
\newcommand{\cB}{\mathcal{B}}
\newcommand{\cP}{\mathcal{P}}
\newcommand{\al}{\alpha}
\newcommand{\be}{\beta}
\newcommand{\ga}{\gamma}
\newcommand{\de}{\delta}
\newcommand{\om}{\omega}
\newcommand{\la}{\lambda}
\renewcommand{\phi}{\varphi}
\newcommand{\CAT}{\operatorname{CAT}}
\newcommand{\cd}{\operatorname{cd}}
\newcommand{\sing}{\operatorname{sing}}
\newcommand{\reg}{\operatorname{reg}}
\newcommand{\sign}{\operatorname{sign}}
\renewcommand{\d}{\partial}
\newcommand{\di}{\d_{\infty}}
\newcommand{\set}[2]{\{#1:\,\text{#2}\}}
\newcommand{\sm}{\setminus}
\newcommand{\sub}{\subset}
\newcommand{\ov}{\overline}
\newcommand{\wt}{\widetilde}
\newcommand{\wh}{\widehat}
\begin{document}

\title{M\"obius and sub-M\"obius structures}

\author{Sergei Buyalo\footnote{Supported by RFFI Grant
14-01-00062}}

\date{}
\maketitle

\begin{abstract} We introduce a notion of a sub-M\"obius
structure and find necessary and sufficient conditions under which 
a sub-M\"obius structure is a M\"obius structure. We show that 
on the boundary at infinity 
$\di Y$
of every Gromov hyperbolic space 
$Y$
there is a canonical sub-M\"obius structure which is invariant under
isometries of
$Y$
such that the sub-M\"obius topology on
$\di Y$
coincides with the standard one. 
\end{abstract}

\section{Introduction} A M\"obius structure on a set 
$X$
is a class of metrics having the same cross-ratio on every 4-tuple 
$P$
in
$X$
whenever it is well defined. There are six variants of cross-ratios, and
a permutation of entries of
$P$
induces a respective permutation of cross-ratios. We introduce a notion 
of a sub-M\"obius structure, a basic feature of which is the same
symmetry under permutations as in the case of M\"obius structures. 
The notion of a sub-M\"obius structure seems to be more flexible than 
that of a M\"obius structure. Like M\"obius structures any sub-M\"obius structure 
induces a uniquely determined topology on
$X$.
Our main result, Theorem~\ref{thm:submoeb_moeb}, establishes
necessary and sufficient conditions under which a sub-M\"obius structure is
a M\"obius structure. These conditions require to deal with admissible 5-tuples in
$X$,
and respective symmetries involve the symmetry group
$S_5$
(however, we do not touch the 
$S_5$-symmetry theme in this paper).

Furthermore, we show that on the boundary at infinity of
every Gromov hyperbolic space 
$Y$
there is a canonical sub-M\"obius structure which is invariant under
isometries of
$Y$
such that the sub-M\"obius topology on
$\di Y$
coincides with the standard one. A M\"obius structure with similar
properties which in addition is ptolemaic is known to exist if 
$Y$
is a hyperbolic complex, that is, a simplicial metric complex,
whose 1-skeleton is a hyperbolic graph with uniformly bounded
valence, see \cite{M}, or if
$Y$
is a
$\CAT(-1)$-space,
see \cite{B,FS}, or
$Y$
is asymptotically PT$_{-1}$, see \cite{MS}.

\section{Semi-metrics and the cross-ratio homomorphism}
\label{sect:semi-metrics}

Let
$X$
be a set. A function
$d:X^2\to\wh\R=\R\cup\{\infty\}$
is called a {\em semi-metric}, if it is symmetric,
$d(x,y)=d(y,x)$
for each
$x$, $y\in X$,
positive outside the diagonal, vanishes on the diagonal
and there is at most one infinitely remote point
$\om\in X$
for
$d$,
i.e. such that
$d(x,\om)=\infty$
for every
$x\in X\sm\{\om\}$.
A metric is a semi-metric that satisfies the triangle inequality.

A 4-tuple
$P=(x,y,z,u)\in X^4$
is said to be {\em admissible} if no entry occurs three or
four times in
$P$.
A 4-tuple
$P$
is {\em nondegenerate}, if all its entries are pairwise
distinct. In what follows, we always assume that
4-tuples we consider are admissible unless the opposite
is explicitly stated.

Let
$\cP_4=\cP_4(X)$
be the set of all ordered admissible 4-tuples of
$X$, $\reg\cP_4\sub\cP_4$
the set of nondegenerate 4-tuples.

We denote by
$L_4$
the subspace in
$\R^3$
given by
$a+b+c=0$
(subindex 
$4$
is related to 4-tuples rather than to the dimension of
$\R^3$).
We extend 
$L_4$
by adding to it thee points
$A=(0,\infty,-\infty)$, $B=(-\infty,0,\infty)$, $C=(\infty,-\infty,0)$,
$\ov L_4=L_4\cup\{A,B,C\}$.

With every semi-metric
$d$
we associate a map 
$M_d:\cP_4\to\ov L_4$
given by

\begin{equation}\label{eq:cross-ratio}
M_d(x_1,x_2,x_3,x_4)=(a,b,c), 
\end{equation}
where
\begin{itemize}
 \item[] $a=\cd(x_1,x_2,x_3,x_4)=(x_1|x_4)+(x_2|x_3)-(x_1|x_3)-(x_2|x_4)$
 \item[] $b=\cd(x_1,x_3,x_4,x_2)=(x_1|x_2)+(x_3|x_4)-(x_1|x_4)-(x_2|x_3)$
 \item[] $c=\cd(x_2,x_3,x_1,x_4)=(x_2|x_4)+(x_1|x_3)-(x_1|x_2)-(x_3|x_4)$,
\end{itemize}
$(x_i|x_j)=-\ln d(x_i,x_j)$
($\cd$
stays for {\em cross-difference}.) Note that
$a+b+c=0$,
and that if
$P=(x_1,x_2,x_3,x_4)$
is degenerate, then
$M_d(P)$
is well defined and takes values
$A$, $B$, $C$.
The map 
$M_d:\cP_4\to\ov L_4$
is called the {\em M\"obius structure} on
$X$
associated with
$d$.

\begin{rem}\label{rem:log_cross-ratio} We prefer to work with
logarithm of cross-ratios instead of cross-ratios themselves
by many reasons, most important of which is that the equations
(A), (B) of Theorem~\ref{thm:submoeb_moeb} are linear in the 
logarithmic setup. 
\end{rem}

Since
$(x|x)=\infty$,
we have, for example,
$M_d(x_1,x_1,x_3,x_4)=(0,\infty,-\infty)$.
Furthermore, since
$(x|\infty)=-\infty$,
we have e.g.
$M_d(x_1,x_2,x_3,\infty)=((x_2|x_3)-(x_1|x_3),(x_1|x_2)-(x_2|x_3),(x_1|x_3)-(x_1|x_2))$.
If in addition 
$x_3$
is also infinitely remote for 
$d$,
then
$M_d(x_1,x_2,\infty,\infty)=(0,\infty,-\infty)$.
Note that
$M_d(P)\in L_4$
if and only if
$P\in\reg\cP_4$
is nondegenerate.

A semi-metric
$d'$
on
$X$
is said to be {\em M\"obius equivalent} to
$d$,
if
$M_{d'}=M_d$.
For example, every {\em metric inversion} 
$d'$
of
$d$
is a semi-metric that is M\"obius equivalent to
$d$,
$$d'(x,y)=\frac{r^2d(x,y)}{d(x,\om)d(y,\om)},\ d'(\om,\om)=0,$$
for some
$r>0$
and all
$x$, $y\in X$
that are not equal to
$\om$
simultaneously, where
$\om\in X$
is not infinitely remote for
$d$.
Note that then
$\om$
is infinitely remote for
$d'$.
In particular, for every
$\om\in X$
there is a semi-metric
$d'$
equivalent
$d$
with infinitely remote
$\om$.
Actually, a M\"obius structure 
$M$
on
$X$
is a class M\"obius equivalent semi-metrics on
$X$,
and we write
$d\in M$
for a semi-metric
$d$
from this class.

We denote by
$S_k$
the symmetry group of
$k$
elements. We use notation 
$\pi(1\dots k)=\pi:=i_1\dots i_k$
for 
$\pi\in S_k$,
where
$i_j=\pi(j)$
for 
$j=1,\dots,k$.

The group
$S_4$
acts on
$\cP_4$
by permutations of the entries of every
$P\in\cP_4$.
The group
$S_3$
acts on
$\ov L_4$
by signed permutations of the coordinates. 
The {\em cross-ratio} homomorphism
$\phi:S_4\to S_3$
can be described as follows: a permutation of a tetrahedron
ordered vertices 
$(1,2,3,4)$
gives rise to a permutation of opposite pairs of edges
$((12)(34),(13)(24),(14)(23))$. 
Thus the kernel 
$K$
of
$\phi$
consists of four elements 1234, 2143, 4321, 3412,
and is isomorphic to the dihedral group
$D_4$
of a square automorphisms, which is also called the {\em Klein} group.
There are six left cosets of 
$K$, 
and the cosets have the following representatives:
\begin{align*}
 \phi(1234\cdot K)=\phi(K)&=123\\
 \phi(2134\cdot K)=\phi(1243\cdot K)&=132\\
 \phi(3214\cdot K)=\phi(1432\cdot K)&=321\\
 \phi(4231\cdot K)=\phi(1324\cdot K)&=213\\
 \phi(2431\cdot K)&=231\\ 
 \phi(3241\cdot K)&=312.
\end{align*}

One easily checks that the map
$M_d$
is equivariant with respect to the {\em signed} cross-ratio homomorphism
$\phi$,
i.e.
$$M_d(\pi P)=\sign(\pi)\phi(\pi)M_d(P)$$
for every
$P\in\cP_4$
and every
$\pi\in S_4$,
where
$\sign:S_4\to\{\pm 1\}$
the homomorphism that associates to every odd permutation the sign 
``$-1$''.

\section{Sub-M\"obius structures}

\subsection{Definition}
\label{subsect:definition}

A {\em sub-M\"obius structure} on a set
$X$
is given by a map
$M:\cP_4\to\ov L_4$,
which satisfies the following conditions
\begin{itemize}
 \item [(a)] $M$
is equivariant with respect to the signed cross-ratio homomorphism
$\phi$,
i.e.
$$M(\pi P)=\sign(\pi)\phi(\pi)M(P)$$
for every
$P\in\cP_4$
and every
$\pi\in S_4$.
 \item [(b)] $M(P)\in L_4$
if and only if
$P$
is nondegenerate, $P\in\reg\cP_4$;
 \item [(c)] $M(P)=(0,\infty,-\infty)$
for
$P=(x_1,x_1,x_3,x_4)\in\cP_4$.
\end{itemize}

It follows from (b) that
$M(P)\in\ov L_4\sm L_4$
if and only if
$P\in\cP_4$
is degenerate. Using (a) and (c) we conclude that

\begin{align*}
 M(x_1,x_1,x_3,x_4)&=M(x_1,x_2,x_3,x_3)=(0,\infty,-\infty)=A\\
 M(x_1,x_2,x_1,x_4)&=M(x_1,x_2,x_3,x_2)=(-\infty,0,\infty)=B\\
 M(x_1,x_2,x_3,x_1)&=M(x_1,x_2,x_2,x_4)=(\infty,-\infty,0)=C.
\end{align*}

\begin{lem}\label{lem:semi-metric_submoebius} For every semi-metric
$d$
on
$X$
the M\"obius structure
$M_d$
associated with 
$d$
is a sub-M\"obius structure.
\end{lem}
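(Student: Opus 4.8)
The plan is to verify the three defining properties (a), (b), (c) of a sub-Möbius structure for $M_d$. Property (a) is precisely the equivariance of $M_d$ under the signed cross-ratio homomorphism $\phi$ already recorded in Section~\ref{sect:semi-metrics}, so it transfers verbatim; property (c) is the computation $M_d(x_1,x_1,x_3,x_4)=(0,\infty,-\infty)$ noted there as well. Hence the only substantial point is (b), and the whole argument rests on one structural observation about the coordinates in~\eqref{eq:cross-ratio}: in each of $a$, $b$, $c$ every index $i\in\{1,2,3,4\}$ occurs in exactly two Gromov products, once with a plus and once with a minus sign; equivalently, each coordinate is built from exactly two of the three partitions of $\{1,2,3,4\}$ into opposite edge-pairs, so any single edge $\{i,j\}$ appears in exactly two coordinates with opposite signs and is absent from the third. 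I would state and check this first, directly from the formulas.

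For the forward implication of (b), suppose $P\in\reg\cP_4$ is nondegenerate. Then all entries are pairwise distinct, so no Gromov product $(x_i|x_j)$ equals $+\infty$, which would force $x_i=x_j$. If one entry is the infinitely remote point $\om$, its two occurrences in each coordinate carry opposite signs, so the two $-\infty$ terms cancel; reading them as a limit along a sequence approaching $\om$ yields a finite value. Hence all three coordinates are finite and, using $a+b+c=0$, we get $M_d(P)\in L_4$.

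For the converse, let $P$ be degenerate but admissible, so either one value is repeated twice or two values are each repeated twice. A coinciding pair $\{i,j\}$ produces $(x_i|x_j)=+\infty$; by the structural observation this edge sits in two coordinates with opposite signs and is absent from the third, the surviving terms of those two coordinates being finite (any $\om$-contribution again cancels by balance). Thus the two coordinates become $+\infty$ and $-\infty$ while the third collapses to $0$ by pairwise cancellation, so running through the few admissible degenerate patterns shows $M_d(P)$ is always one of $A$, $B$, $C$, in agreement with the table after the definition; in particular $M_d(P)\in\ov L_4\sm L_4$. This gives (b).

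The main obstacle is exactly the bookkeeping of infinities in the degenerate case when the repeated value and the infinitely remote point occur together, since a coordinate then formally reads as $\infty-\infty+\infty$. The structural observation removes the apparent indeterminacy: $\om$ always enters in balanced plus/minus fashion and contributes a finite limit, after which the genuine $+\infty$ from the coinciding pair fixes the sign unambiguously. With this cancellation principle in hand, the remaining cases are routine.
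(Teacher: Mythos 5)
Your proposal is correct and takes essentially the same route as the paper: the paper's entire proof is the remark that properties (a)--(c) are straightforward to check for $M_d$ (with (a), (c) and the degenerate values already recorded in Section~\ref{sect:semi-metrics}), and your balanced-sign bookkeeping is precisely that check carried out in detail. One small caveat: in the degenerate sub-case where the repeated entry is $\om$ itself (e.g.\ $P=(\om,\om,x_3,x_4)$), the surviving terms of the two infinite coordinates are not finite as you claim, since the $(\cdot\,|\,\om)=-\infty$ terms there enter with the same sign rather than cancelling in balanced pairs; however, that sign agrees with the sign of the $+\infty$ produced by the coinciding pair, so the conclusion $M_d(P)\in\{A,B,C\}$ still holds and the argument is unaffected.
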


\begin{proof} It is straightforward to check properties
(a)--(c) for 
$M_d$.
\end{proof}

\subsection{Functions associated with a sub-M\"obius structure}

Let
$X$
be a set. Given an ordered tuple
$P=(x_1,\dots,x_k)\in X^k$
we use notation
$$P_i=(x_1,\dots,x_{i-1},x_{i+1},\dots,x_k)$$
for
$i=1,\dots,k$.

We define the set 
$\cP_n$
of the {\em admissible} 
$n$-tuples 
in
$X$, $n\ge 5$,
recurrently by asking for every
$P\in\cP_n$
the 
$(n-1)$-tuples
$P_i$, $i=1,\dots,n$,
to be admissible. The set 
$\sing\cP_n$
of 
$n$-tuples
$P\in\cP_n$
having equal entries is called the {\em singular}
subset of
$\cP_n$.
Its complement
$\reg\cP_n=\cP_n\sm\sing\cP_n$
is called the {\em regular} subset of
$\cP_n$. 
Note that 
$\reg\cP_n$
coincides with the set of nondegenerate 
$n$-tuples.

A triple
$A=(\al,\be,\om)\in X^3$
of pairwise distinct points is called a {\em scale triple}.
A 5-tuple
$P=(x,y,A)=(x,y,\al,\be,\om)$,
where
$A=(\al,\be,\om)$
is a scale triple, is admissible if and only if the cases
$x=y=\al$, $x=y=\be$, $x=y=\om$
are excluded. Then the 4-tuples
$$ P_1=(y,\al,\be,\om),\ P_2=(x,\al,\be,\om),\ P_3=(x,y,\be,\om),\ P_4=(x,y,\al,\om)$$
are admissible. For a sub-M\"obius structure 
$M$
on
$X$
we use notation
$M(P_i)=(a(P_i),b(P_i),c(P_i))$
for 
$i=1,\dots,4$.

\begin{lem}\label{lem:well_defined_A} Let
$M$
be a sub-M\"obius structure on
$X$, $A=(\al,\be,\om)\in X^3$
a scale triple,
$P=(x,y,A)\in\cP_5$.
Then
\begin{itemize}
 \item[] $b(P_1)+b(P_4)$
is well defined if and only if
$y\neq\al$;
 \item[] $-a(P_1)+b(P_3)$
is well defined if and only if
$y\neq\be$.
\end{itemize}
\end{lem}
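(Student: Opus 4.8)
The plan is to reduce both equivalences to a single observation: a sum $s+t$ of two elements of $\R\cup\{\pm\infty\}$ fails to be well defined precisely when $\{s,t\}=\{+\infty,-\infty\}$, this being the only indeterminate form. So I would first determine, for each coordinate that enters the two expressions, exactly when it is $+\infty$, when $-\infty$, and when it is finite. By property (b) every coordinate of $M(P_i)$ is finite as soon as $P_i$ is nondegenerate, so infinite values can arise only from the three degenerate patterns $A=(0,\infty,-\infty)$, $B=(-\infty,0,\infty)$, $C=(\infty,-\infty,0)$ recorded in the display following the definition of a sub-M\"obius structure. The whole proof is thus a matching of each $P_i$ against the six listed degenerate patterns.

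For the first assertion I would read off $b(P_1)$ for $P_1=(y,\al,\be,\om)$. Since $\al,\be,\om$ are pairwise distinct, $P_1$ is degenerate iff $y$ coincides with one of them, and the table gives $M(P_1)=A,B,C$ according as $y=\al,\be,\om$; hence $b(P_1)=+\infty,0,-\infty$ in these cases and $b(P_1)\in\R$ otherwise. In the same way I would tabulate $b(P_4)$ for $P_4=(x,y,\al,\om)$ from its degeneracies $x=y$, $x=\al$, $x=\om$, $y=\al$, $y=\om$. The decisive entries are that $b(P_1)=+\infty$ exactly when $y=\al$, and that in this case $P_4=(x,\al,\al,\om)$ forces $M(P_4)=C$, i.e. $b(P_4)=-\infty$; so $y=\al$ already produces the indeterminate form $(+\infty)+(-\infty)$.

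It then remains to check the converse, that $y\neq\al$ makes the sum well defined. Here $b(P_1)$ can no longer be $+\infty$ (that required $y=\al$), so the only remaining danger is $b(P_1)=-\infty$ together with $b(P_4)=+\infty$; but these force $y=\om$ and $x=y$ respectively, i.e. $x=y=\om$, which is exactly the coincidence forbidden by admissibility of $P\in\cP_5$. This is the step where admissibility is essential. The second assertion is entirely parallel: one tabulates $a(P_1)$ (so that $-a(P_1)=+\infty$ iff $y=\be$, via $M(P_1)=B$) and $b(P_3)$ for $P_3=(x,y,\be,\om)$, observes that $y=\be$ turns $P_3$ into $(x,\be,\be,\om)$ with $M(P_3)=C$ and hence $b(P_3)=-\infty$, and rules out the complementary bad case $y=\om$, $x=y$ again by the exclusion $x=y=\om$.

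I expect the only real obstacle to be bookkeeping: keeping straight which of the three degenerate values $A,B,C$ each $M(P_i)$ takes under each coincidence, and hence the sign of the resulting infinity. Organizing the degenerate patterns into the six cases listed after the definition and matching each $P_i$ against them removes any ambiguity, after which both equivalences fall out immediately.
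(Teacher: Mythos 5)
Your proof is correct and follows essentially the same route as the paper's: both arguments reduce everything to matching $P_1$, $P_3$, $P_4$ against the table of degenerate values $A$, $B$, $C$ forced by properties (a)--(c), and then checking which combinations of infinities can actually occur. The only organizational difference is that the paper exhibits a finite (in fact zero) term in each surviving case, whereas you rule out the two opposite-infinity combinations directly --- the second of which ($x=y=\om$) you correctly kill by admissibility of the 5-tuple, a point the paper uses only implicitly (via admissibility of $P_4$).
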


\begin{proof} If
$y=\al$,
then
$b(P_1)=\infty$, $b(P_4)=-\infty$.
Hence
$b(P_1)+b(P_4)$
is not well defined. Similarly, if
$y=\be$,
then 
$a(P_1)=-\infty=b(P_3)$,
hence
$-a(P_1)+b(P_3)$
is not well defined.

In the opposite direction, assume
$y\neq\al$.
Then
$M(P_1)\in\ov L_4\sm L_4$
if and only if
$y=\be$
or
$y=\om$.
If
$y=\be$,
then 
$b(P_1)=0$,
and
$b(P_1)+b(P_4)$
is well defined. If
$y=\om$,
then 
$b(P_4)=0$,
and
$b(P_1)+b(P_4)$
is well defined.

Assume
$y\neq\be$.
Then 
$M(P_1)\in\ov L_4\sm L_4$
if and only if
$y=\al$
or
$y=\om$.
If
$y=\al$,
then 
$a(P_1)=0$,
and
$-a(P_1)+b(P_3)$
is well defined. If
$y=\om$,
then 
$b(P_3)=0$,
and
$-a(P_1)+b(P_3)$
is well defined.
\end{proof}

\begin{lem}\label{lem:well_defined_B} Let
$M$
be a sub-M\"obius structure on
$X$, $A=(\al,\be,\om)\in X^3$
a scale triple,
$P=(x,y,A)\in\cP_5$.
Then
$-a(P_4)+b(P_1)$
is well defined if and only if
$y\neq\al,\om$.
\end{lem}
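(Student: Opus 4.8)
The plan is to reduce the question to tracking exactly which coordinates of $M(P_4)$ and $M(P_1)$ become infinite, since the sum $-a(P_4)+b(P_1)$ fails to be well defined precisely when its two summands are infinite with opposite signs, that is, of the form $+\infty+(-\infty)$ or $-\infty+\infty$. By property (b) a coordinate of $M(P_i)$ is infinite if and only if $P_i$ is degenerate, and in that case properties (a) and (c) force $M(P_i)$ to be one of the three model points $A=(0,\infty,-\infty)$, $B=(-\infty,0,\infty)$, $C=(\infty,-\infty,0)$, according to which pair of entries coincides. Since $\al$, $\be$, $\om$ are pairwise distinct, the only coincidences available among the entries of $P_4=(x,y,\al,\om)$ and $P_1=(y,\al,\be,\om)$ are those involving $x$ or $y$; this is the bookkeeping that drives the whole argument, exactly as in Lemma~\ref{lem:well_defined_A}.

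For the ``only if'' direction I would check the two forbidden cases directly. If $y=\al$, then in $P_4=(x,\al,\al,\om)$ the second and third entries coincide, so $M(P_4)=C$ and $a(P_4)=\infty$, while in $P_1=(\al,\al,\be,\om)$ the first and second entries coincide, so $M(P_1)=A$ and $b(P_1)=\infty$; hence $-a(P_4)+b(P_1)=-\infty+\infty$ is undefined. Symmetrically, if $y=\om$, then $P_4=(x,\om,\al,\om)$ has equal second and fourth entries, giving $M(P_4)=B$ and $a(P_4)=-\infty$, whereas $P_1=(\om,\al,\be,\om)$ has equal first and fourth entries, giving $M(P_1)=C$ and $b(P_1)=-\infty$; thus the sum is $+\infty-\infty$, again undefined.

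For the ``if'' direction the decisive observation is that $b(P_1)$ is automatically finite once $y\neq\al,\om$. Indeed, the triple $\al,\be,\om$ is nondegenerate, so $P_1=(y,\al,\be,\om)$ can be degenerate only through one of the coincidences $y=\al$, $y=\be$, $y=\om$; inspecting the model points shows that $b(P_1)=\infty$ only when $y=\al$, that $b(P_1)=-\infty$ only when $y=\om$, and that $y=\be$ yields the finite value $b(P_1)=0$. Hence under the hypothesis $y\neq\al,\om$ we have $b(P_1)\in\R$, and adding the possibly infinite quantity $-a(P_4)$ to a finite number can never produce an indeterminate form, so $-a(P_4)+b(P_1)$ is well defined.

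The argument has no serious obstacle; the only place demanding care is the case analysis identifying which of $A$, $B$, $C$ governs each degenerate $4$-tuple and, in particular, the use of the scale-triple hypothesis to discard the spurious coincidences $\be=\om$ and $\al=\be$, which would otherwise suggest extra infinite values of $b(P_1)$.
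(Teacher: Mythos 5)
Your proof is correct and follows essentially the same route as the paper: both arguments check that $y=\al$ and $y=\om$ force $-a(P_4)+b(P_1)$ into the indeterminate forms $-\infty+\infty$ and $+\infty-\infty$ via the model points $A$, $B$, $C$, and both observe that under $y\neq\al,\om$ the coordinate $b(P_1)$ is finite (equal to $0$ when $y=\be$, and finite by property (b) otherwise), so the sum is well defined regardless of $a(P_4)$.
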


\begin{proof} If
$y=\al$,
then
$b(P_1)=\infty$, $a(P_4)=\infty$.
Hence
$-a(P_4)+b(P_1)$
is not well defined. Similarly, if
$y=\om$,
then 
$b(P_1)=-\infty=a(P_4)$,
hence
$-a(P_4)+b(P_1)$
is not well defined.

In the opposite direction, assume
$y\neq\al,\om$.
Then
$M(P_1)\in\ov L_4\sm L_4$
if and only if
$y=\be$.
In this case
$b(P_1)=0$,
thus
$-a(P_4)+b(P_1)$
is well defined. 
\end{proof}

For a sub-M\"obius structure
$M$
on
$X$
we define functions which play an important role in a number
of questions, see sect.~\ref{subsect:submoebius_vs_moebius} and 
sect.~\ref{sect:submoebius_topology}. We let
$A=(\al,\be,\om)$
be a scale triple in
$X$.
In all cases when a 5-tuple
$P=(x,y,A)$
is not admissible, we put
$d_A^\al(x,y)=d_A^\be(x,y)=0$.
Assume that
$P=(x,y,A)$
is an admissible 5-tuple. We put
\begin{itemize}
 \item[] $d_A^\al(x,y)=e^{-b(P_1)-b(P_4)}$ 
if
$y\neq\al$,
 \item[] $d_A^\be(x,y)=e^{a(P_1)-b(P_3)}$ 
if
$y\neq\be$.
\end{itemize}
It follows from Lemma~\ref{lem:well_defined_A} that both
$d_A^\al(x,y)$, $d_A^\be(x,y)\in\wh\R$
are well defined. We use notation
$X_\om=X\sm\{\om\}$.

\begin{lem}\label{lem:def_semi-metric} The functions
$d_A^\al$, $d_A^\be$
defined above vanish on the diagonal,
$d_A^\al(x,x)=0=d_A^\be(x,x)$
for every
$x\in X$,
the point
$\om$
is infinitely remote for
$d_A^\al$, $d_A^\be$,
$d_A^\al(x,\om)=d_A^\be(x,\om)=\infty$
for every
$x\in X_\om$,
$d_A^\al(\om,x)=\infty$
for 
$x\neq\al,\om$, $d_A^\be(\om,x)=\infty$
for 
$x\neq\be,\om$,
and 
$d_A^\al(x,y),d_A^\be(x,y)\neq 0$
for each
$x\neq y$
whenever the functions are well defined.
\end{lem}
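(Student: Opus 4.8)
The plan is to reduce every assertion to evaluating the sub-M\"obius structure $M$ on the three $4$-tuples $P_1,P_3,P_4$ attached to the $5$-tuple $P=(x,y,\al,\be,\om)$, and then to read off the single coordinate that enters each exponent. The organizing principle is the table of degenerate values displayed just after the definition of a sub-M\"obius structure: an admissible \emph{degenerate} $4$-tuple takes the value $A=(0,\infty,-\infty)$, $B=(-\infty,0,\infty)$ or $C=(\infty,-\infty,0)$ according to which of the three opposite pairs of positions $\{(12),(34)\}$, $\{(13),(24)\}$, $\{(14),(23)\}$ carries a coincidence of entries; admissibility forbids coincidences in two distinct pairs at once, so the value is unambiguous (a double coincidence can only be of the parallel type, such as the pattern $(p,q,p,q)$, which lies in a single pair). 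By property (b), every $4$-tuple $P_i$ that is \emph{not} degenerate contributes finite coordinates. Thus in each case I only have to decide, from the coincidences among $x,y$ and the scale points $\al,\be,\om$, which of $A,B,C$ (if any) occurs at $P_1,P_3,P_4$.

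First I would treat the diagonal and the infinitely remote point, where some exponent is forced to $\pm\infty$. For $d_A^\al(x,x)$ the convention disposes of $x\in\{\al,\be,\om\}$ (then $P$ is not admissible), so let $x=y\notin\{\al,\be,\om\}$: here $P_4=(x,x,\al,\om)$ has equal first two entries, a coincidence in the pair $\{(12),(34)\}$, whence $M(P_4)=A$ and $b(P_4)=\infty$, while $P_1=(x,\al,\be,\om)$ is nondegenerate and $b(P_1)$ is finite; the exponent $-b(P_1)-b(P_4)$ is $-\infty$ and $d_A^\al(x,x)=0$. The case $d_A^\be(x,x)$ is identical with $P_3=(x,x,\be,\om)$ in place of $P_4$. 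For the infinitely remote point I put $y=\om$: then $P_1=(\om,\al,\be,\om)$ has equal outer entries, a coincidence in $\{(14),(23)\}$, so $M(P_1)=C$ and $a(P_1)=\infty$, $b(P_1)=-\infty$, whereas $P_4=(x,\om,\al,\om)$ and $P_3=(x,\om,\be,\om)$ have a coincidence in $\{(13),(24)\}$, giving $M=B$ and $b(P_4)=b(P_3)=0$; hence both exponents equal $+\infty$ and $d_A^\al(x,\om)=d_A^\be(x,\om)=\infty$. The one-sided statements $d_A^\al(\om,x)=\infty$ for $x\neq\al,\om$ and $d_A^\be(\om,x)=\infty$ for $x\neq\be,\om$ are obtained symmetrically by placing $\om$ in the $x$-slot, so that $P_4$ (respectively $P_3$) picks up the coincidence in $\{(14),(23)\}$ and contributes $b=-\infty$, the companion coordinate staying finite. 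At every step I would invoke Lemma~\ref{lem:well_defined_A} to be sure the relevant sum is well defined, so that no $\infty-\infty$ ambiguity can occur.

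Finally, for the nonvanishing off the diagonal I would show that under the standing hypotheses the exponent cannot equal $-\infty$. Since $d_A^\al(x,y)=e^{-b(P_1)-b(P_4)}$ vanishes exactly when $b(P_1)+b(P_4)=+\infty$, and since $b(P')=+\infty$ holds precisely when $M(P')=A$, i.e. when $P'$ carries a coincidence in $\{(12),(34)\}$, I would note that $b(P_1)=+\infty$ would force $y=\al$ or $\be=\om$, and $b(P_4)=+\infty$ would force $x=y$ or $\al=\om$. All four are excluded by $x\neq y$, by $y\neq\al$ (the hypothesis making $d_A^\al$ well defined), and by $A$ being a scale triple; as the sum is well defined, it is not $+\infty$ and $d_A^\al(x,y)\neq 0$. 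The assertion for $d_A^\be(x,y)=e^{a(P_1)-b(P_3)}$ is the mirror image: the value is $0$ only if $a(P_1)=-\infty$ (that is $M(P_1)=B$, forcing $y=\be$ or $\al=\om$) or $b(P_3)=+\infty$ (that is $M(P_3)=A$, forcing $x=y$ or $\be=\om$), each again excluded by $x\neq y$, by $y\neq\be$, and by $A$ being a scale triple.

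The main obstacle I anticipate is not conceptual but bookkeeping: correctly matching each coincidence pattern among $x,y,\al,\be,\om$ to the right element of $\{A,B,C\}$ and to the right coordinate, and checking in the degenerate sub-cases (for instance $x=\al$ or $y=\be$, which produce parallel double coincidences of the form $(p,q,p,q)$) that the predicted value still holds and that the companion coordinate stays finite. Keeping the well-definedness hypotheses of Lemma~\ref{lem:well_defined_A} in force throughout is exactly what rules out the only genuine pitfall, an undefined $\infty-\infty$.
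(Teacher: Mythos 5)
Your proposal is correct and follows essentially the same route as the paper's proof: both arguments reduce every claim to evaluating $M$ on the degenerate $4$-tuples among $P_1,P_3,P_4$ via the table $A,B,C$ derived from axioms (a) and (c), and both prove the nonvanishing claim by noting that $b(P')=+\infty$ (resp.\ $a(P')=-\infty$) forces $M(P')=A$ (resp.\ $B$), hence a coincidence excluded by $x\neq y$, $y\neq\al$ (or $y\neq\be$) and the distinctness of the scale triple. Your coincidence-pair bookkeeping, including the parallel patterns $(p,q,p,q)$, matches the explicit values the paper computes case by case, so no gap remains.
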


\begin{proof} Assume a 5-tuple
$P=(x,x,\al,\be,\om)$
is admissible. Then
$x\neq\al,\be,\om$
and thus
$M(P_1)\in L_4$, $M(P_3)=M(P_4)=(0,\infty,-\infty)$.
Hence
$d_A^\al(x,x)=0=d_A^\be(x,x)$
for all
$x\neq\al,\be,\om$. 
Using the definition
$d_A^\al(x,y)=0=d_A^\be(x,y)$
for the case when the 5-tuple
$P=(x,y,A)$
is not admissible, we obtain
$d_A^\al(x,x)=0=d_A^\be(x,x)$
for all
$x\in X$.

For the 5-tuple
$P=(x,\om,\al,\be,\om)$
we have
$P_1=(\om,\al,\be,\om)$, 
$P_4=(x,\om,\al,\om)$, 
$P_3=(x,\om,\be,\om)$. 
Thus
$M(P_1)=(\infty,-\infty,0)$,
$M(P_3)=M(P_4)=(-\infty,0,\infty)$.
If
$x\in X_\om$,
then
$P$
is admissible. We conclude that
$d_A^\al(x,\om)=\infty=d_A^\be(x,\om)$
for every
$x\in X_\om$.
Furthermore, the 5-tuple
$P=(\om,x,\al,\be,\om)$
with
$x\in X_\om$
is admissible and for
$P_4=(\om,x,\al,\om)$, $P_3=(\om,x,\be,\om)$
we have
$M(P_3)=M(P_4)=(\infty,-\infty,0)$.
Thus
$d_A^\al(\om,x)=\infty$
for 
$x\neq\al,\om$
and
$d_A^\be(\om,x)=\infty$
for 
$x\neq\be,\om$.

Assume
$d_A^\al(x,y)=0$
for an admissible 5-tuple
$P=(x,y,\al,\be,\om)$, $y\neq\al$.
Then
$M(P_1)\neq(0,\infty,-\infty)$,
hence
$b(P_4)=\infty$,
thus
$M(P_4)=(0,\infty,-\infty)$,
and we conclude that
$x=y$.

Assume
$d_A^\be(x,y)=0$
for an admissible 5-tuple
$P=(x,y,\al,\be,\om)$, $y\neq\be$.
Then
$M(P_1)\neq(-\infty,0,\infty)$,
hence
$b(P_3)=\infty$,
thus
$M(P_3)=(0,\infty,-\infty)$,
and we conclude that
$x=y$.
\end{proof}

\subsection{Sub-M\"obius structure vs M\"obius structure}
\label{subsect:submoebius_vs_moebius}

\begin{thm}\label{thm:submoeb_moeb} A sub-M\"obius structure
$M$
on
$X$
is a M\"obius structure if and only if for every scale triple
$A=(\al,\be,\om)\in X^3$
and every admissible 5-tuple
$P=(x,y,\al,\be,\om)$
the following conditions (A), (B) are satisfied
\begin{itemize}
 \item [(A)] $b(P_1)+b(P_4)=b(P_3)-a(P_1)$
for all
$y\in X$, $y\neq\al,\be$;
 \item[(B)] $b(P_2)=-a(P_4)+b(P_1)$,
where
$y\neq\al,\om$.
\end{itemize}
\end{thm}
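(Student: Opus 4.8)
The plan is to show that a sub-Möbius structure $M$ is a Möbius structure precisely when we can recover a semi-metric from it, and that conditions (A), (B) are exactly the compatibility relations needed for this recovery to succeed. The natural strategy is to use the functions $d_A^\al$, $d_A^\be$ built in the previous subsection as candidate semi-metrics, so the whole theorem reduces to understanding when these functions are genuine semi-metrics whose associated Möbius structure $M_d$ equals $M$.

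\medskip

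First I would address necessity, which should be the easier direction. If $M=M_d$ for some semi-metric $d$, then all the quantities $a(P_i)$, $b(P_i)$ are concrete cross-differences in the Gromov-product notation $(x_i|x_j)=-\ln d(x_i,x_j)$. I would simply expand both sides of (A) and (B) using the defining formulas for $\cd$ applied to the relevant 4-subtuples $P_1,\dots,P_4$ of $P=(x,y,\al,\be,\om)$, and check that the Gromov products cancel to give an identity. The key point (Remark~\ref{rem:log_cross-ratio}) is that in the logarithmic setup these become \emph{linear} identities among the products $(x|y),(x|\al),\dots,(\be|\om)$, so the verification is a finite bookkeeping of which products appear with which sign. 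I expect (A) to reduce to a true identity once the six products involving $x,y,\al,\be,\om$ are written out, and (B) likewise; the restrictions $y\neq\al,\be$ resp.\ $y\neq\al,\om$ are exactly the conditions under Lemmas~\ref{lem:well_defined_A} and \ref{lem:well_defined_B} that keep both sides well defined, so no $\infty-\infty$ obstruction arises.

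\medskip

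For sufficiency I would fix a scale triple $A=(\al,\be,\om)$ and work with $d:=d_A^\al$, aiming to show that (A) and (B) force $d$ to be a semi-metric with $M_d=M$. By Lemma~\ref{lem:def_semi-metric} we already know $d$ vanishes on the diagonal, is positive off it, and has $\om$ as its infinitely remote point; symmetry $d(x,y)=d(y,x)$ I would derive from the $S_4$-equivariance (a) applied to the relevant 4-tuples. The heart of the matter is to compute $M_d(x,y,z,u)$ for a nondegenerate 4-tuple and show it equals $M(x,y,z,u)$. Using the metric inversion formulas and the freedom to normalize $\om$ as infinitely remote, I would express the Gromov products $(x|y)_d=-\ln d_A^\al(x,y)$ in terms of the values $b(P_1),b(P_4),a(P_1),b(P_3)$ of $M$; condition (A) is what guarantees that the two a priori different formulas for $d_A^\al$ (one built from $b(P_1)+b(P_4)$, the normalization tied to $\al$, and one tied to $\be$ via $d_A^\be$) agree, so that the metric is independent of the auxiliary choice and its cross-difference reproduces $M$. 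Condition (B) should supply the remaining relation needed to pin down the third coordinate, equivalently to verify the cocycle/consistency of the Gromov products across the five points.

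\medskip

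The main obstacle I anticipate is the sufficiency direction, specifically the reconstruction step: showing that the locally defined $d_A^\al$ assemble into a single semi-metric whose $M_d$ is $M$ globally, not just on tuples containing the fixed scale triple $A$. One must check that the Möbius structure $M_d$ does not depend on the choice of $A$, and this is where (A), (B) — holding for \emph{every} scale triple — must be invoked to glue the scale-dependent data. I would handle this by showing that for two scale triples sharing the infinitely remote point $\om$, the corresponding functions differ by the metric-inversion rescaling that leaves $M_d$ unchanged, reducing the general case via (a) to the normalized situation. Making the passage from the five-point relations (A), (B) to the full cross-difference identity on arbitrary nondegenerate 4-tuples precise and verifying that no degenerate configuration is overlooked is the delicate part; everything else is the linear algebra of Gromov products that the logarithmic formulation renders routine.
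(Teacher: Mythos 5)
Your overall architecture is the same as the paper's: use condition (A) to make the two formulas $d_A^\al=e^{-b(P_1)-b(P_4)}$ and $d_A^\be=e^{a(P_1)-b(P_3)}$ consistent, reconstruct a candidate semi-metric $d=d_A$, show $M_d=M$ first on 4-tuples meeting the scale triple, and then propagate to arbitrary 4-tuples by metric inversions and changes of scale triple; the necessity direction is indeed just linear bookkeeping. But there is a genuine error in your symmetry step. You claim that $d(x,y)=d(y,x)$ follows from the $S_4$-equivariance (a) ``applied to the relevant 4-tuples.'' It cannot. Write $P=(x,y,\al,\be,\om)$ and $P'=(y,x,\al,\be,\om)$. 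Then $d(x,y)=e^{-b(P_1)-b(P_4)}$ with $P_1=(y,\al,\be,\om)$, $P_4=(x,y,\al,\om)$, while $d(y,x)=e^{-b(P_1')-b(P_4')}$ with $P_1'=(x,\al,\be,\om)=P_2$ and $P_4'=(y,x,\al,\om)$. Equivariance does relate $P_4'$ to $P_4$ (giving $b(P_4')=-c(P_4)$), but $P_1'=P_2$ and $P_1$ are 4-tuples on \emph{different point sets} ($x$ versus $y$, together with $\al,\be,\om$), so no permutation --- hence no application of (a) --- connects $b(P_2)$ with $b(P_1)$. The missing relation is exactly condition (B): $b(P_2)=-a(P_4)+b(P_1)$, which combined with $a(P_4)+b(P_4)+c(P_4)=0$ gives $-b(P_2)+c(P_4)=-b(P_1)-b(P_4)$, i.e.\ symmetry. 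This is not cosmetic: if symmetry followed from (a) alone, every sub-M\"obius structure would yield a semi-metric and (B) would be largely superfluous in the statement; in fact (B) is used twice in the paper, once for symmetry and once (as $a(P_4)=b(P_1)-b(P_2)$) to identify $M_d$ with $M$ on the faces $P_3$, $P_4$ --- a role your sketch assigns vaguely to ``pinning down the third coordinate.''

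Your handling of the global step is also underspecified in a way that hides a real difficulty. You correctly identify that $M_d=M$ must be extended beyond 4-tuples containing points of $A$, and you propose that functions attached to scale triples sharing the infinitely remote point differ by a rescaling. That rescaling claim (the analogue of the paper's Lemma~\ref{lem:extension}) is not free: to compare $d_{(\al,\om,\be)}$ with $d_{(\al,o,\be)}$ one must compute $M$ on the 4-tuple $(y,\al,o,\be)$, which contains the \emph{new} point $o$, and this requires already knowing $M_d(P_5)=M(P_5)$ for 5-tuples over the original scale triple (Corollary~\ref{cor:last_face}), which in turn rests on the inversion lemma and the restriction lemma. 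So the argument is a bootstrap: restriction $\Rightarrow$ inversion $\Rightarrow$ last face $\Rightarrow$ extension $\Rightarrow$ chain of equivalences $d\sim d_{(\al,\om,z)}\sim\dots\sim d_{(z,u,\om)}$ reaching a scale triple disjoint from the given 4-tuple. Without spelling out this order of deductions, the claim that the scale-dependent data ``glue'' is an assertion, not a proof.
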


It follows from Lemma~\ref{lem:well_defined_A} that 
the both sides of (A) are well defined,
and from Lemma~\ref{lem:well_defined_B} that 
the both  sides of the equality (B) are also well defined.

We prove  Theorem~\ref{thm:submoeb_moeb} in three steps. 
On the first step we define a function
$d=d_A:X\times X\to\wh\R$
for a fixed scale triple
$A=(\al,\be,\om)$.
Then Lemma~\ref{lem:def_semi-metric} implies that
$d$ 
vanishes on the diagonal, is positive outside of the diagonal, and
$\om$
is infinitely remote for
$d$.

On the second step, Lemma~\ref{lem:symmetric_dist}, we show that 
$d$
is symmetric,
$d(x,y)=d(y,x)$
for every
$x$, $y\in X$,
and therefore
$d$
is a semi-metric. On the final step, Lemmas~\ref{lem:moebius_restrict}, 
\ref{lem:inversion}, \ref{lem:extension}
and Corollary~\ref{cor:last_face}, we show that the M\"obius structure 
$M_d$
associated with
$d$
coincides with the initial sub-M\"obius structure,
$M_d=M$.

\begin{proof}[Proof of Theorem~\ref{thm:submoeb_moeb}]
It is straightforward to check that every M\"obius structure
$M$, $M=M_d$
for some semi-metric
$d$
on
$X$,
satisfies (A) and (B).

Assume that a sub-M\"obius structure
$M$
satisfies (A) and (B). We fix a scale triple
$A=(\al,\be,\om)\in X^3$.
One should think of
$\om$
as the infinitely remote point, and of the pair 
$\al$, $\be$
as a ``measuring rod'' which fixes the scale in
$X$.
We define a function
$d:X\times X\to\wh\R$, $d=d_A$,
as follows. We put
$d(\al,\al)=0$, $d(\be,\be)=0$, $d(\om,\om)=0$.
For
$x,y\in X$
such that the 5-tuple
$P=(x,y,\al,\be,\om)$
is admissible we put
\begin{equation}\label{eq:def_d_function}
 d(x,y)=\begin{cases}
d_A^\al(x,y)=e^{-b(P_1)-b(P_4)}&y\neq\al\\
d_A^\be(x,y)=e^{a(P_1)-b(P_3)}&y\neq\be
        \end{cases}.
\end{equation}
If
$y\neq\al,\be$,
then the right hand sides of (\ref{eq:def_d_function}) 
are well defined and they coincide by condition~(A).
By Lemma~\ref{lem:def_semi-metric} the function
$d$ 
vanishes on the diagonal, is positive outside of the diagonal, and
$\om$
is infinitely remote for
$d$.

\begin{lem}\label{lem:symmetric_dist} The function
$d=d_A$
is a semi-metric with 
$d(\al,\be)=1$
for every scale triple
$A=(\al,\be,\om)\in X^3$.
\end{lem}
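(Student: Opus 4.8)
The plan is to establish two properties of $d=d_A$: first that $d(\al,\be)=1$, and second (the substantive part) that $d$ is symmetric. Once symmetry is known, Lemma~\ref{lem:def_semi-metric} already gives that $d$ vanishes on the diagonal, is positive off the diagonal, and has $\om$ as its unique infinitely remote point, so $d$ is a semi-metric and the proof is complete.

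For the normalization $d(\al,\be)=1$, I would directly evaluate the defining formula~(\ref{eq:def_d_function}) on the pair $(x,y)=(\al,\be)$. The relevant 5-tuple is $P=(\al,\be,\al,\be,\om)$, which is admissible. Since $y=\be\neq\al$, the first branch applies, so $d(\al,\be)=e^{-b(P_1)-b(P_4)}$ with $P_1=(\be,\al,\be,\om)$ and $P_4=(\al,\be,\al,\om)$. Here $P_1$ is degenerate (entry $\be$ repeats) and $P_4$ is degenerate (entry $\al$ repeats), so $M(P_1)$ and $M(P_4)$ land in $\ov L_4\sm L_4$ and are read off from the explicit degenerate values listed after condition~(c). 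I expect the degenerate $b$-values to be $\pm\infty$ in a way that makes $d(\al,\be)=e^{-b(P_1)-b(P_4)}$ ill-defined by the first branch alone; this is exactly why one wants the second branch or a symmetric choice. The cleaner route is to use the pair $(x,y)=(\al,\be)$ with $y=\be$ excluded and instead compute via the nondegenerate reading, or to exploit that $(\al,\be)$ sits so that $P_2,P_3$ carry the finite information. The key point to verify is that the surviving finite contributions cancel to give exponent $0$, whence $d(\al,\be)=1$.

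The main work is Lemma~\ref{lem:symmetric_dist}'s symmetry claim $d(x,y)=d(y,x)$, and this is where I expect the real obstacle. For a generic nondegenerate pair $x,y$ (both distinct from $\al,\be,\om$), I would write $d(x,y)$ using one branch of~(\ref{eq:def_d_function}) and $d(y,x)$ using the corresponding branch, then translate the equality I need into a relation among the cross-differences $a,b,c$ of the four 4-tuples $P_1,P_2,P_3,P_4$ and their counterparts $Q_i$ for $Q=(y,x,\al,\be,\om)$. The strategy is to use the equivariance property~(a) of the sub-M\"obius structure: swapping $x$ and $y$ in the 5-tuple corresponds to a transposition acting on each $4$-tuple, and by $S_4$-equivariance this produces controlled sign changes and coordinate permutations $\sign(\pi)\phi(\pi)$ in the values $M(P_i)$. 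Thus $d(y,x)$ can be re-expressed in terms of the same $a(P_i),b(P_i),c(P_i)$ that define $d(x,y)$. The symmetry should then reduce to a linear identity in these logarithmic coordinates, and I anticipate that condition~(B) (and possibly (A)) is precisely what supplies the missing relation closing the identity.

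The hardest step is organizing the bookkeeping: there are several degenerate and boundary cases ($y=\al$, $y=\be$, $y=\om$, or $x$ equal to one of $\al,\be,\om$) where one or both branches of~(\ref{eq:def_d_function}) must be checked against the default value $0$ or against $\infty$, and in each such case symmetry must be verified separately using the explicit degenerate values of $M$ together with Lemma~\ref{lem:def_semi-metric}. For the generic case I expect the algebra to go through by combining equivariance with condition~(B), but keeping track of which transposition acts on which $P_i$, and hence which sign and which coordinate permutation appears, is the delicate part; a clean way to control this is to fix one reference branch for both $d(x,y)$ and $d(y,x)$ and reduce everything to a single application of~(B) after using~(a) to rewrite the swapped $4$-tuples.
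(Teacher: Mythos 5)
Your plan for the symmetry claim is essentially the paper's argument: after disposing of the boundary cases, for nondegenerate $P=(x,y,\al,\be,\om)$ and $P'=(y,x,\al,\be,\om)$ one writes $d(y,x)=e^{-b(P_1')-b(P_4')}$, observes that $P_1'=P_2$, and applies equivariance to the transposition $\pi=2134$ acting on $P_4$: since $\sign(\pi)=-1$ and $\phi(\pi)=132$, one gets $M(P_4')=(-a(P_4),-c(P_4),-b(P_4))$, hence $b(P_4')=-c(P_4)$. Condition~(B) then gives $-b(P_1')-b(P_4')=-b(P_2)+c(P_4)=a(P_4)-b(P_1)+c(P_4)=-b(P_1)-b(P_4)$, the last equality because $a(P_4)+b(P_4)+c(P_4)=0$; only (B) is needed in this step, not (A). You correctly anticipated this combination of equivariance and (B), but none of it is executed in your proposal. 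Note also that the cases $y\in\{\al,\be\}$ are not settled by Lemma~\ref{lem:def_semi-metric}, which says nothing about $d(x,\al)$ versus $d(\al,x)$; the paper computes them directly, e.g.\ for $P=(x,\al,\al,\be,\om)$ the degeneracy $a(P_1)=0$ gives $d(x,\al)=e^{-b(x,\al,\be,\om)}$, while for $P=(\al,x,\al,\be,\om)$ the degeneracy $b(P_4)=0$ gives $d(\al,x)=e^{-b(x,\al,\be,\om)}$ as well, and similarly for $\be$ using $b(x,\be,\al,\om)=-a(x,\al,\be,\om)$.

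The genuine error is in your treatment of $d(\al,\be)=1$. For $P=(\al,\be,\al,\be,\om)$ the relevant faces are $P_1=(\be,\al,\be,\om)$ and $P_4=(\al,\be,\al,\om)$, both of the degenerate pattern $(x_1,x_2,x_1,x_4)$, so by the table following condition~(c) one has $M(P_1)=M(P_4)=(-\infty,0,\infty)=B$. The $b$-coordinate of $B$ is $0$, not $\pm\infty$: the first branch of (\ref{eq:def_d_function}) is perfectly well defined here and yields $d(\al,\be)=e^{-0-0}=1$ immediately. Your expectation that the $b$-values blow up and force one to a ``second branch or a symmetric choice'' is a misreading of the degenerate values (the infinite coordinates of $B$ sit in the $a$- and $c$-slots), and your proposed fallback cannot work as stated: for $y=\be$ the second branch is unavailable by definition, and the tuples $P_2$, $P_3$ do not enter the formula for $d(\al,\be)$ at all. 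This part of the argument must be replaced by the one-line computation above.
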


\begin{proof} For an admissible 5-tuple
$P=(x,\al,\al,\be,\om)$
we have
$P_3=(x,\al,\be,\om)$,
$P_1=(\al,\al,\be,\om)$.
Thus
$a(P_1)=0$.
Hence using (\ref{eq:def_d_function}) we obtain
$$d(x,\al)=e^{-b(P_3)}=e^{-b(x,\al,\be,\om)}$$
for every
$x\neq\al$.
Furthermore, for the 5-tuple
$P=(\al,x,\al,\be,\om)$
we have
$P_4=(\al,x,\al,\om)$,
$P_1=(x,\al,\be,\om)$.
Thus
$b(P_4)=0$
and using (\ref{eq:def_d_function}) we obtain for
$x\neq\al$,
$$d(\al,x)=e^{-b(P_1)}=e^{-b(x,\al,\be,\om)}.$$
Thus
$d(x,\al)=d(\al,x)$
for all
$x\in X$.

Similarly, for an admissible 5-tuple
$P=(x,\be,\al,\be,\om)$
we have
$P_4=(x,\be,\al,\om)$,
$P_1=(\be,\al,\be,\om)$.
Thus
$b(P_1)=0$
and using (\ref{eq:def_d_function}) we obtain
$$d(x,\be)=e^{-b(P_4)}=e^{-b(x,\be,\al,\om)}$$
for every
$x\neq\be$.
Furthermore, for the 5-tuple
$P=(\be,x,\al,\be,\om)$
we have
$P_3=(\be,x,\be,\om)$,
$P_1=(x,\al,\be,\om)$.
Thus
$b(P_3)=0$
and using (\ref{eq:def_d_function}) we obtain for
$x\neq\be$,
$$d(\be,x)=e^{a(P_1)}=e^{a(x,\al,\be,\om)}.$$
Since
$b(x,\be,\al,\om)=-a(x,\al,\be,\om)$
by equivariance of
$M$
with respect to the signed cross-ratio homomorphism, we have
$d(x,\be)=d(\be,x)$
for all
$x\in X$.

Now, we show that
$d(x,y)=d(y,x)$.
By computation above, we can assume that the 5-tuples
$P=(x,y,\al,\be,\om)$, $P'=(y,x,\al,\be,\om)$
are nondegenerate and hence admissible.
By (\ref{eq:def_d_function}) we have
$d(y,x)=e^{-b(P_1')-b(P_4')}$.
By equivariance of
$M$
we have
$M(P_4')=(-a,-c,-b)$,
where
$M(P_4)=(a,b,c)$.
Thus
$b(P_4')=-c(P_4)$.
Using
$P_1'=P_2$
and condition~(B) we obtain
$-b(P_1')-b(P_4')=-b(P_2)+c(P_4)=a(P_4)-b(P_1)+c(P_4)=-b(P_1)-b(P_4)$
because
$a(P_4)+b(P_4)+c(P_4)=0$.
Hence
$d(y,x)=d(x,y)$.
Therefore, the function
$d:X\times X\to\wh\R$
is a semi-metric with infinite remote point
$\om$.

The 5-tuple 
$P=(\al,\be,\al,\be,\om)$
is admissible with
$P_1=(\be,\al,\be,\om)$, $P_4=(\al,\be,\al,\om)$.
Thus
$M(P_1)=(-\infty,0,\infty)=M(P_4)$
and
$b(P_1)=0=b(P_4)$.
By (\ref{eq:def_d_function}),
$d(\al,\be)=1$.
\end{proof}

It remains to check that
$M_d=M$.
The conditions~(a), (b), (c) of sect.~\ref{subsect:definition} are satisfied for
$M_d$.
Thus
$M_d(P)=M(P)$
for every degenerate
$P\in\cP_4$.

\begin{lem}\label{lem:moebius_restrict} Given a scale triple
$A=(\al,\be,\om)$
and an admissible 5-tuple
$P=(x,y,\al,\be,\om)$,
for the 4-tuples
$Q=P_1,P_2,P_3,P_4$
we have
$M_d(Q)=M(Q)$
for the semi-metric
$d=d_A$. 
\end{lem}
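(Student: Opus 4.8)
The statement to prove is Lemma~\ref{lem:moebius_restrict}: for the semi-metric $d = d_A$ built from the sub-Möbius structure $M$ (under hypotheses (A), (B)), the associated Möbius structure $M_d$ agrees with $M$ on each of the four 4-tuples $P_1, P_2, P_3, P_4$ arising from an admissible 5-tuple $P = (x,y,\al,\be,\om)$.

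Let me sketch my plan.

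\bigskip

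The plan is to compute $M_d(P_i)$ directly from the definition of the cross-difference $\cd$ in~(\ref{eq:cross-ratio}), using the explicit values of $d$ supplied by Lemma~\ref{lem:symmetric_dist} and equation~(\ref{eq:def_d_function}). The cleanest entry point is $P_1 = (y,\al,\be,\om)$, since all of its entries lie among the distinguished points $\al,\be,\om$ together with the free point $y$, and Lemma~\ref{lem:symmetric_dist} already records the values $d(y,\al) = e^{-b(y,\al,\be,\om)}$, $d(y,\be) = e^{a(y,\al,\be,\om)}$, and $d(\al,\be) = 1$, while $\om$ is infinitely remote. First I would substitute these into the three coordinates $a,b,c$ of $M_d(P_1)$; because $\om$ is the infinitely remote point of $d$, two of the Gromov products $(\cdot|\om)$ diverge and cancel in pairs inside each cross-difference, leaving a finite expression in the remaining logarithms. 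The expectation is that $M_d(P_1)$ collapses exactly to $M(P_1) = (a(P_1), b(P_1), c(P_1))$.

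\bigskip

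Next I would handle $P_4 = (x,y,\al,\om)$ and $P_3 = (x,y,\be,\om)$, which contain the genuinely free pair $x,y$. Here I would feed in $d(x,y)$ from~(\ref{eq:def_d_function}), namely $d(x,y) = e^{-b(P_1)-b(P_4)}$ (equivalently $e^{a(P_1)-b(P_3)}$ by condition~(A)), together with the boundary-distance formulas $d(x,\al), d(y,\al)$ and $d(x,\be),d(y,\be)$ from Lemma~\ref{lem:symmetric_dist}. Again $\om$ being infinitely remote forces the $(\cdot|\om)$ terms to cancel in the relevant cross-differences, so each coordinate of $M_d(P_4)$ and $M_d(P_3)$ reduces to a linear combination of $b(P_1), b(P_4), a(P_1), b(P_3)$ and the edge lengths of the scale triple; the aim is to verify these match $M(P_4)$ and $M(P_3)$ on the nose, the matching being exactly what conditions (A) and the definition of $d$ were designed to guarantee. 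Finally $P_2 = (x,\al,\be,\om)$ is the analogue of $P_1$ with $x$ in place of $y$, so it follows by the same computation as $P_1$.

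\bigskip

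The main obstacle I anticipate is \emph{not} any single calculation but the bookkeeping of degenerate sub-cases: when one of $x,y$ coincides with $\al$ or $\be$, some of the 4-tuples $P_i$ become degenerate, so $M_d(P_i)$ lands in $\{A,B,C\} = \ov L_4 \sm L_4$ rather than in $L_4$, and the Gromov products blow up in a second independent direction. These cases must be checked separately, reading off the correct vertex $A$, $B$, or $C$ and confirming it agrees with the value of $M$ prescribed by axioms (a)--(c). The substantive input in the nondegenerate case is that the two formulas in~(\ref{eq:def_d_function}) agree (condition~(A)) and that the symmetry $d(x,y) = d(y,x)$ already established in Lemma~\ref{lem:symmetric_dist} (which used condition~(B)) makes the coordinates of $M_d(P_i)$ insensitive to the order of $x$ and $y$; once those two facts are in hand, the verification is a direct cancellation of logarithms with no further appeal to the hypotheses.
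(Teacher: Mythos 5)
Your proposal is correct and follows essentially the same route as the paper: reduce to nondegenerate $P_i$ via the sub-M\"obius axioms, compute $M_d(P_i)$ explicitly using that $\om$ is infinitely remote for $d$ together with the distance formulas $(y|\al)=b(P_1)$, $(y|\be)=-a(P_1)$, $(x|\al)=b(P_2)$, $(x|\be)=-a(P_2)$, $(x|y)=b(P_1)+b(P_4)$, and then match coordinates. The only cosmetic difference is that where the paper invokes condition~(B) directly to obtain $a(P_4)=b(P_1)-b(P_2)$, you route the same identity through the symmetry $d(x,y)=d(y,x)$ plus equivariance of $M$, which is equivalent since Lemma~\ref{lem:symmetric_dist} derived that symmetry from (B) by exactly this computation.
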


\begin{proof} We shall use that for
$Q=(x,y,z,\om)\in\cP_4$
$$M_d(Q)=((y|z)-(x|z),(x|y)-(y|z),(x|z)-(x|y))\in\ov L_4$$
because 
$\om$
is infinitely remote for 
$d$.
By remark before the lemma we can assume that 
the 4-tuples
$P_i$, $i=1,2,3,4$
are nondegenerate. 

For
$P_2=(x,\al,\be,\om)$
we put
$M(P_2)=(a,b,c)\in L_4$.
The 5-tuples
$P'=(x,\al,\al,\be,\om)$,
$P''=(x,\be,\al,\be,\om)$
are admissible and
$a(P_1')=0$, $b(P_3')=b(P_2)=b$, $b(P_1'')=0$, 
$b(P_4'')=b(x,\be,\al,\om)=-a(x,\al,\be,\om)=-a$.
By (\ref{eq:def_d_function}) we have
$$(x|\al)=-a(P_1')+b(P_3')=b,$$
and
$$(x|\be)=b(P_1'')+b(P_4'')=-a.$$
Thus
$M_d(P_2)=((\al|\be)-(x|\be),(x|\al)-(\al|\be),(x|\be)-(x|\al))
=(a,b,-a-b)=(a,b,c)=M(P_2)$.
Similarly, we obtain
$M_d(P_1)=M(P_1)$.
In particular,
$(x|\al)=b(P_2)$, $(x|\be)=-a(P_2)$, $(y|\al)=b(P_1)$, $(y|\be)=-a(P_1)$.

For
$P_4=(x,y,\al,\om)$
we have
\begin{align*}
M_d(P_4)&=((y|\al)-(x|\al),(x|y)-(y|\al),(x|\al)-(x|y))\\
        &=(b(P_1)-b(P_2),(x|y)-b(P_1),b(P_2)-(x|y)).
\end{align*}

Condition~(B) yields
$a(P_4)=b(P_1)-b(P_2)$.
By (2) we have
$(x|y)=b(P_1)+b(P_4)$,
thus
$b(P_4)=(x|y)-b(P_1)$.
Hence,
\begin{align*}
M(P_4)&=(a(P_4),b(P_4),c(P_4))\\
      &=(b(P_1)-b(P_2),(x|y)-b(P_1),c(P_4))=M_d(P_4).
\end{align*}
Similarly, we have
$M_d(P_3)=M(P_3)$
for 
$P_3=(x,y,\be,\om)$.
\end{proof}

\begin{lem}\label{lem:inversion} For scale triples
$A=(\al,\be,\om)$, $A'=(\be,\al,\om)$, $A''=(\al,\om,\be)$
we have
$d:=d_A=d_{A'}$
and
$d'=d_{A''}$
is the metric inversion of
$d$
with respect to 
$\be$
of radius 1,
$$d'(x,y)=\frac{d(x,y)}{d(x,\be)d(y,\be)}$$
for all
$x$, $y\in X$
that are not equal to
$\be$
simultaneously. In particular,
$d'$
is M\"obius equivalent to
$d$
and
$M_{d'}=M_d$.
\end{lem}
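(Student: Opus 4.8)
The plan is to verify both assertions by direct computation, using the equivariance of $M$ under the signed cross-ratio homomorphism $\phi$ to rewrite the values $M(P_i')$, $M(P_i'')$ of the reshuffled $4$-tuples in terms of the already understood values $M(P_i)$, and then reading off the exponents in (\ref{eq:def_d_function}). Throughout I would use the identifications gathered along the way in Lemma~\ref{lem:symmetric_dist} and Lemma~\ref{lem:moebius_restrict}, namely $(x|\al)=b(P_2)$, $(x|\be)=-a(P_2)$, $(y|\al)=b(P_1)$, $(y|\be)=-a(P_1)$, $(x|y)=b(P_1)+b(P_4)$, together with the fact that $M_d(Q)=M(Q)$ for $Q=P_1,\dots,P_4$.

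First, for $d_A=d_{A'}$ with $A'=(\be,\al,\om)$: writing $P'=(x,y,\be,\al,\om)$ one checks directly that $P_3'=P_4$ and $P_4'=P_3$, while $P_1'=(y,\be,\al,\om)$ is obtained from $P_1=(y,\al,\be,\om)$ by the transposition $\pi=1324$ of the two middle entries. Since $\phi(1324)=213$ and $\sign(1324)=-1$, equivariance gives $M(P_1')=(-b(P_1),-a(P_1),-c(P_1))$, in particular $b(P_1')=-a(P_1)$. Applying the first line of (\ref{eq:def_d_function}) to $A'$ (whose first scale point is $\be$) then yields $d_{A'}(x,y)=e^{-b(P_1')-b(P_4')}=e^{a(P_1)-b(P_3)}$, which is precisely $d_A^\be(x,y)=d_A(x,y)$. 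The second line of (\ref{eq:def_d_function}) for $A'$ covers the remaining values, and the special cases in which an entry lies in $\{\al,\be,\om\}$ follow from Lemma~\ref{lem:def_semi-metric}; hence $d_{A'}=d_A=:d$.

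Next, for $A''=(\al,\om,\be)$ the infinitely remote point is $\be$, so I would compute $d_{A''}(x,y)$ through the $\om$-line of (\ref{eq:def_d_function}), i.e. from $P_1''=(y,\al,\om,\be)$ and $P_3''=(x,y,\om,\be)$. The point of choosing this line is that both these $4$-tuples still contain $\om$ and are transposes (in the last two entries, $\tau=1243$, with $\phi(1243)=132$ and $\sign(1243)=-1$) of $P_1$ and $P_3$ respectively; the alternative line would force the $4$-tuple $(x,y,\al,\be)$, which does not contain $\om$ and is not yet controlled. Equivariance gives $a(P_1'')=-a(P_1)=(y|\be)$ and $b(P_3'')=-c(P_3)$, and since $M_d(P_3)=M(P_3)$ with $\om$ infinitely remote one has $c(P_3)=(x|\be)-(x|y)$. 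Substituting, $d_{A''}(x,y)=e^{a(P_1'')-b(P_3'')}=e^{(x|\be)+(y|\be)-(x|y)}=\frac{d(x,y)}{d(x,\be)\,d(y,\be)}$, which is exactly the metric inversion of $d$ in $\be$ of radius $1$.

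Finally, the boundary values (one of $x,y$ equal to $\al$, $\be$, or $\om$) I would check separately from Lemma~\ref{lem:def_semi-metric} and the conventions on the infinitely remote point, where the two infinities in the inversion formula cancel. Since $d'=d_{A''}$ is a metric inversion of $d$, it is a semi-metric M\"obius equivalent to $d$ by the discussion in Section~\ref{sect:semi-metrics}, whence $M_{d'}=M_d$. I expect the main obstacle to be organisational rather than conceptual: keeping the signed permutation bookkeeping straight and, above all, recognising that for $A''$ one must use the $\om$-line of (\ref{eq:def_d_function}) so that every $4$-tuple entering the computation still contains $\om$ and is therefore governed by Lemma~\ref{lem:moebius_restrict}.
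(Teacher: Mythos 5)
Your proposal is correct and takes essentially the same approach as the paper: both arguments use equivariance under the signed cross-ratio homomorphism together with Lemma~\ref{lem:moebius_restrict} to control $M$ on the permuted $4$-tuples (the paper likewise uses $\pi=1243$ for $A''$ and likewise must use the line of (\ref{eq:def_d_function}) whose $4$-tuples still contain $\om$), and then read off the inversion formula $d'(x,y)=e^{(x|\be)+(y|\be)-(x|y)}$. The only difference is cosmetic: you write out the $d_A=d_{A'}$ case explicitly (via $\pi=1324$), which the paper dismisses as ``similar''.
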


\begin{proof}
For the nondegenerate 5-tuples 
$P=(x,y,\al,\be,\om)$
and
$P'=(x,y,\al,\om,\be)$
we have
$P_1'=\pi\circ P_1$, $P_3'=\pi\circ P_3$,
where
$\pi$
is the permutation
$1243$.
Using that
$M(P_1)=M_d(P_1)$, $M(P_3)=M_d(P_3)$
by Lemma~\ref{lem:moebius_restrict}, we obtain
$$M(P_1')=M\circ\pi(P_1)=\sign(\pi)\phi(\pi)M(P_1)
  =\sign(\pi)\phi(\pi)M_d(P_1)=M_d(P_1')$$
and similarly
$M(P_3')=M_d(P_3')$.
Since
$\om$
is infinitely remote for 
$d$
and
$(\al|\be)=-\ln d(\al,\be)=0$,
we obtain
\begin{align*}
M(P_1')&=((y|\be),(y|\al)-(y|\be),-(y|\al))\\
M(P_3')&=((x|\be)-(y|\be),(x|y)-(x|\be),(y|\be)-(x|y)). 
\end{align*}
Then for the semi-metric
$d'=d_{A''}$
we have by (\ref{eq:def_d_function})
$$d'(x,y)=e^{a(P_1')-b(P_3')}=e^{(y|\be)-(x|y)+(x|\be)}
    =\frac{d(x,y)}{d(x,\be)d(y,\be)},$$
i.e.
$d'$
is the metric inversion of
$d$
with respect to
$\be$
of radius 1. The proof of the equality
$d_A=d_{A'}$
is similar.
\end{proof}

\begin{cor}\label{cor:last_face} Let
$d=d_A$
be the semi-metric associated with the scale triple
$A=(\al,\be,\om)\in X^3$,
and let
$P=(x,y,\al,\be,\om)$
be an admissible 5-tuple. Then
$M_d(P_5)=M(P_5)$.
\end{cor}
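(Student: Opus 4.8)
The plan is to reduce the single remaining face $P_5=(x,y,\al,\be)$ --- the only 4-tuple among $P_1,\dots,P_5$ that does not contain $\om$ --- to a situation already covered by Lemma~\ref{lem:moebius_restrict}. The key observation is that although $P_5$ omits $\om$, it contains $\be$, and by Lemma~\ref{lem:inversion} the point $\be$ is exactly the infinitely remote point of the inverted semi-metric $d'=d_{A''}$ attached to the scale triple $A''=(\al,\om,\be)$. Thus $P_5$ plays, relative to $A''$, the same structural role that the faces $P_1,\dots,P_4$ play relative to $A$.

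Concretely, I would pass to the scale triple $A''=(\al,\om,\be)$ and the 5-tuple $P''=(x,y,\al,\om,\be)$. Since $P''$ differs from $P$ only by transposing its last two entries, and admissibility depends only on the multiset of entries, $P''\in\cP_5$ is admissible. Its faces are $P''_1=(y,\al,\om,\be)$, $P''_2=(x,\al,\om,\be)$, $P''_3=(x,y,\om,\be)$ and $P''_4=(x,y,\al,\be)$; in particular $P''_4=P_5$.

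Because $M$ satisfies (A) and (B) for every scale triple, in particular for $A''$, the function $d'=d_{A''}$ is a semi-metric by Lemma~\ref{lem:symmetric_dist}, and Lemma~\ref{lem:moebius_restrict} applied to $A''$ and $P''$ gives $M_{d'}(P''_i)=M(P''_i)$ for $i=1,2,3,4$. Taking $i=4$ yields $M_{d'}(P_5)=M(P_5)$.

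Finally, I would invoke Lemma~\ref{lem:inversion}, which identifies $d'=d_{A''}$ as the metric inversion of $d=d_A$ with respect to $\be$ and therefore gives $M_{d'}=M_d$. Combining the last two facts produces $M_d(P_5)=M_{d'}(P_5)=M(P_5)$, as claimed. I do not anticipate any genuine obstacle: the argument is purely organizational, trading the missing face for a face containing the new infinitely remote point, after which the earlier lemmas apply without modification. The only point requiring a moment's care is confirming that $P''$ remains admissible and that its face $P''_4$ really is $P_5$, both of which are immediate.
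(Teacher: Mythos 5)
Your proposal is correct and is essentially identical to the paper's own proof: both pass to the 5-tuple $(x,y,\al,\om,\be)$ with scale triple $(\al,\om,\be)$, apply Lemma~\ref{lem:moebius_restrict} to its fourth face (which equals $P_5$), and then use Lemma~\ref{lem:inversion} to conclude $M_{d'}=M_d$. No gaps; the extra care you take in checking admissibility of the transposed 5-tuple is implicit in the paper's argument.
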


\begin{proof} We put
$P'=(x,y,\al,\om,\be)$
and note that
$M_{d'}(P_4')=M(P_4')$
for
$d'=d_{(\al,\om,\be)}$
by Lemma~\ref{lem:moebius_restrict}. Since
$M_d=M_{d'}$
by Lemma~\ref{lem:inversion}
and
$P_4'=(x,y,\al,\be)=P_5$,
we have
$M_d(P_5)=M_{d'}(P_4')=M(P_5)$.
\end{proof}

\begin{lem}\label{lem:extension} Given a nondegenerate 4-tuple
$(\al,\be,\om,o)\in\cP_4$,
the semi-metrics
$d_{(\al,\be,\om)}$
and
$d_{(\al,\be,o)}$
are M\"obius equivalent,
$d_{(\al,\be,\om)}\sim d_{(\al,\be,o)}$. 
\end{lem}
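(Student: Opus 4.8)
The plan is to show that the two semi-metrics $d_{(\al,\be,\om)}$ and $d_{(\al,\be,o)}$ are M\"obius equivalent by verifying that their associated M\"obius structures agree on a sufficiently rich collection of $4$-tuples, and then bootstrapping to all of $\cP_4$. Write $d=d_{(\al,\be,\om)}$ and $e=d_{(\al,\be,o)}$. The two scale triples share the same ``measuring rod'' $\al,\be$ but use different infinitely remote points, $\om$ for $d$ and $o$ for $e$. The key tool is that for any admissible $5$-tuple $P$ involving the relevant points, Lemma~\ref{lem:moebius_restrict} already tells us that $M_d$ reproduces $M$ on the four faces $P_1,P_2,P_3,P_4$ of a $5$-tuple whose last three entries form the scale triple $(\al,\be,\om)$, and Corollary~\ref{cor:last_face} extends this to the fifth face $P_5$. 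The analogous statements hold for $e$ with the scale triple $(\al,\be,o)$.

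First I would fix a nondegenerate $4$-tuple $Q$ and try to realize $Q$ as a face of an admissible $5$-tuple adapted to \emph{both} scale triples. The natural device is to consider the nondegenerate $6$-tuple built from $Q$ together with the points $\al,\be,\om,o$, or more economically to run the following two-step comparison: on the one hand, $M_d=M$ on every face of any admissible $5$-tuple $(x,y,\al,\be,\om)$; on the other hand, $M_e=M$ on every face of any admissible $5$-tuple $(x,y,\al,\be,o)$. For a generic $4$-tuple $Q=(x,y,\al,\be)$ whose entries include the common rod $\al,\be$, one can append $\om$ to form $P=(x,y,\al,\be,\om)$ and append $o$ to form $\wt P=(x,y,\al,\be,o)$; then $Q$ appears as the face $P_5=\wt P_5$, and Corollary~\ref{cor:last_face} gives $M_d(Q)=M(Q)=M_e(Q)$. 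This handles all nondegenerate $4$-tuples containing both $\al$ and $\be$.

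Next I would remove the restriction that $Q$ contain the rod $\al,\be$. Here the idea is to use the metric-inversion machinery of Lemma~\ref{lem:inversion} to move the scale, so that the measuring rod for $d$ (and for $e$) can be replaced by any other pair of distinct points without changing the M\"obius class. Concretely, for a fixed scale triple one can permute or reselect the rod points via inversions and coset representatives of the cross-ratio homomorphism $\phi$, exactly as in the proof of Lemma~\ref{lem:inversion}, to arrange that an arbitrary nondegenerate $4$-tuple $Q$ becomes expressible through faces of admissible $5$-tuples whose scale triples are comparable. Combining this with the equivariance property~(a) and the agreement on degenerate $4$-tuples (where $M_d(P)=M(P)=M_e(P)$ is automatic since all three take the forced values $A,B,C$), one concludes $M_d(Q)=M_e(Q)$ for every $Q\in\cP_4$, hence $M_d=M_e$ and $d\sim e$.

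The main obstacle, I expect, is the bookkeeping in the second step: ensuring that every nondegenerate $4$-tuple $Q$ can be reached as a face of an admissible $5$-tuple compatible with \emph{both} scale triples simultaneously, while keeping all intermediate tuples admissible (no forbidden triple-repetitions) and tracking the signs and coordinate permutations introduced by $\sign(\pi)\phi(\pi)$ under the relevant $\pi\in S_4$. The inversions change the infinitely remote point, so one must verify that the formula for $M_d$ on a $4$-tuple containing an infinitely remote entry still matches, and that the scale normalization $d(\al,\be)=1=e(\al,\be)$ from Lemma~\ref{lem:symmetric_dist} is preserved under the moves; this is where the argument is most prone to subtle case distinctions, and I would organize it by first treating $4$-tuples sharing one rod point with the scale and then iterating.
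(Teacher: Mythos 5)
Your first step is correct and is a device the paper itself uses: applying Corollary~\ref{cor:last_face} to the two $5$-tuples $(x,y,\al,\be,\om)$ and $(x,y,\al,\be,o)$, together with equivariance and the forced values on degenerate tuples, gives $M_{d}(Q)=M(Q)=M_{e}(Q)$ for every admissible $4$-tuple $Q$ containing both $\al$ and $\be$, where $d=d_{(\al,\be,\om)}$ and $e=d_{(\al,\be,o)}$. The gap is your second step. M\"obius equivalence means $M_d(Q)=M_e(Q)$ for \emph{all} $Q\in\cP_4$, and the mechanism you propose for tuples missing the rod is circular: Lemma~\ref{lem:inversion} only lets you \emph{permute} the three points of a fixed scale triple ($d_{(\al,\be,\om)}=d_{(\be,\al,\om)}$, and $d_{(\al,\om,\be)}$ is a metric inversion of $d_{(\al,\be,\om)}$); it never introduces a \emph{new} point into the scale triple. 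The ability to ``replace the measuring rod by any other pair of distinct points without changing the M\"obius class'' is precisely the content of Lemma~\ref{lem:extension}, the statement being proved --- indeed that is exactly how the paper uses it afterwards, in the chain $d\sim d_{(\al,\om,\be)}\sim d_{(\al,\om,z)}\sim d_{(z,\al,\om)}$ at the end of the proof of Theorem~\ref{thm:submoeb_moeb}. Since Lemma~\ref{lem:moebius_restrict} and Corollary~\ref{cor:last_face} only control $M_d$ on $4$-tuples meeting $\{\al,\be,\om\}$ in at least two entries, no combination of the cited lemmas reaches a $4$-tuple disjoint from both scale triples, so your bootstrap does not close.

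The paper avoids the bootstrap entirely. It first uses Lemma~\ref{lem:inversion} to replace both semi-metrics by M\"obius equivalent ones with the \emph{same} infinitely remote point: $d'=d_{(\al,\om,\be)}$ and $e'=d_{(\al,o,\be)}$, both with $\be$ at infinity. Then, for a nondegenerate $5$-tuple $P'=(x,y,\al,o,\be)$, it evaluates the two terms in the defining formula~(\ref{eq:def_d_function}) for $e'$: the face $P_1'=(y,\al,o,\be)$ is a permutation of $(y,o,\al,\be,\om)_5$, so $M(P_1')=M_{d'}(P_1')$ by Corollary~\ref{cor:last_face} and equivariance, while $M(P_4')=M_{d'}(P_4')$ by Lemma~\ref{lem:moebius_restrict}; this yields $e'(x,y)=e^{(\al|o)-(x|y)}=d'(x,y)/d'(\al,o)$. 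Thus $e'$ is a constant multiple of $d'$, and homothetic semi-metrics are trivially M\"obius equivalent, so agreement on all of $\cP_4$ comes for free. If you want to rescue your own outline instead, the missing ingredient is an identity that propagates agreement from tuples containing $\al,\be$ to all tuples, for instance the cocycle identity for cross-differences of a semi-metric, $\cd(x,y,z,u)=\cd(x,\al,z,\be)+\cd(\al,y,z,\be)+\cd(x,\al,\be,u)+\cd(\al,y,\be,u)$, together with a careful treatment of the cases where $\om$, $o$ or repeated entries produce infinite terms; nothing in your write-up supplies this.
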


\begin{proof}
Using that
$d_{(\al,\be,\om)}\sim d_{(\al,\om,\be)}$
by Lemma~\ref{lem:inversion}, it suffices to show that the semi-metrics
$d=d_{(\al,\om,\be)}$
and
$d'=d_{(\al,o,\be)}$
are M\"obius equivalent.

We take 
$x$, $y\in X$
such that the 5-tuples
$P=(x,y,\al,\om,\be)$
and
$P'=(x,y,\al,o,\be)$
are nondegenerate. The 4-tuple
$P_1'=(y,\al,o,\be)$
is obtained from
$Q'=(y,o,\al,\be)=(y,o,\al,\be,\om)_5$
by the permutation
$\pi=1324$.
By Corollary~\ref{cor:last_face},
$M(Q')=M_{d_{(\al,\be,\om})}(Q')=M_d(Q')$.
Thus
$$M(P_1')=M_d(P_1')=((\al|o)-(y|o),(y|\al)-(\al|o),(y|o)-(y|\al))$$
because
$\be$
is infinitely remote for
$d$.
Note that
$P_4=(x,y,\al,\be)=P_4'$.
Hence
$$M(P_4')=M_d(P_4)=((y|\al)-(x|\al),(x|y)-(y|\al),(x|\al)-(x|y)).$$
Thus using (\ref{eq:def_d_function}) we obtain
$$d'(x,y)=e^{-b(P_1')-b(P_4'})=e^{(\al|o)-(x|y)}=\frac{d(x,y)}{d(\al,o)}.$$
Therefore
$d'=\la d$
for
$\la=1/d(\al,o)$,
and the semi-metrics
$d$
and
$d'$
are M\"obius equivalent.
\end{proof}

To complete the proof of Theorem~\ref{thm:submoeb_moeb}
it suffices to show that
$M_d(Q)=M(Q)$
for
$d=d_{(\al,\be,\om)}$
and any nondegenerate 4-tuple
$Q=(x,y,z,u)\in\reg\cP_4$
with all entries
$x$, $y$, $z$, $u$
different from
$\al$, $\be$, $\om$.
Applying Lemma~\ref{lem:inversion} and Lemma~\ref{lem:extension}
we have
$$d\sim d_{(\al,\om,\be)}\sim d_{(\al,\om,z)}\sim d_{(z,\al,\om)}.$$
Repeating we obtain
$d\sim d'$
for
$d'=d_{(z,u,\om)}$.
Hence
$M(Q)=M_{d'}(Q)=M_d(Q)$.
\end{proof}

\section{Sub-M\"obius structures and topology}
\label{sect:submoebius_topology}

Since a semi-metric 
$d$
on
$X$
not necessarily satisfies the triangle
inequality, the open balls in 
$X$
with respect to
$d$
not necessarily form a base of a topology on
$X$.
Nevertheless, we can still use the open balls to determine
a topology.

Recall that a collection 
$\cB$
of subsets of
$X$ 
is called a {\em subbase} of a topology
$\tau$
on
$X$
if finite intersections of elements of
$\cB$
form a base of
$\tau$.  
For any collection
$\cB$
of subsets there is a unique topology on
$X$
with subbase
$\cB$.

Let
$M$
be a M\"obius structure on a set
$X$.
For any scale triple
$A=(\al,\be,\om)\in X^3$
there is a uniquely determined semi-metric
$d_A$
with infinitely remote point
$\om$
such that 
$d_A(\al,\be)=1$.

The 
$M$-{\em topology}
on 
$X$
is determined by the subbase 
$\cB=\cB(M)$
consisting of all open balls,
$B_{A,t}(y)=\set{x\in X}{$d_A(x,y)<t$}$, 
where
$A=(\al,\be,\om)\in X^3$
is a scale triple,
$t>0$, $y\in X_\om=X\sm\{\om\}$.
Applying a metric inversion we see that complements 
of closed balls
$C_{A,t}(y)=\set{x\in X}{$d_A(x,y)>t$}$, $y\in X_\om$,
are also members of the subbase
$\cB$.
The $M$-topology associated with a M\"obius structure
$M$
is also called a {\em semi-metric topology}.

Every sub-M\"obius structure 
$M$
induces on
$X$
a canonical topology. Its definition is justified by 
Proposition~\ref{pro:semi-metric_topology} below and
Theorem~\ref{thm:submoeb_moeb}. 
We fix a scale triple
$A=(\al,\be,\om)$.
For every
$y\in X_\om$, $y\neq\al$
and
$t>0$
we set 
$$B_{A,t}^{\al}(y)=\set{x\in X}{$d_A^\al(x,y)<t$},$$
the open 
$\al$-ball
with respect to
$A$
of radius
$t$
centered at
$y$.
Note that
$\om\not\in B_{A,t}^\al(y)$
for any
$t>0$
because 
$d_A^\al(\om,y)=\infty$
by Lemma~\ref{lem:def_semi-metric}.

Similarly, for every
$y\in X_\om$, $y\neq\be$
and
$t>0$
we set
$$B_{A,t}^\be(y)=\set{x\in X}{$d_A^\be(x,y)<t$},$$
the open 
$\be$-ball
with respect to
$A$
of radius
$t$
centered at
$y$.
As above,
$\om\not\in B_{A,t}^\be(y)$.

The topology on
$X$
with the subbase 
$\cB=\cB(M)$
consisting of all open
$\al$-
and
$\be$-balls
with respect to all scale triples
$A=(\al,\be,\om)\in X^3$
is called the 
$M$-topology.
Given a sub-M\"obius structure
$M$
on
$X$, 
we always assume that 
$X$
is a topological space with the
$M$-topology.

\begin{lem}\label{lem:replace_scale} Given a scale triple
$A=(\al,\be,\om)\in X^3$,
for every
$y\neq\al,\om$
and every
$x\in X_\om$
such that the 5-tuple
$P=(x,y,A)$
is admissible, we have
$$d_A^\al(x,y)\cdot d_{A'}^\al(x,\om)=\la,$$
where
$A'=(\al,\om,y)$
and
$\la=\la(y,A)>0$
is independent of
$x$.

Furthermore, for every
$y\neq\be,\om$
and every
$x\in X_\om$
such that the 5-tuple
$P=(x,y,A)$
is admissible, we have
$$d_A^\be(x,y)\cdot d_{A''}^\be(x,\om)=\mu,$$
where
$A''=(\om,\be,y)$
and
$\mu=\mu(y,A)>0$
is independent of
$x$.
\end{lem}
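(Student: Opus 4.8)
The plan is to unfold both factors via the defining formulas and to isolate, in each, the single face of the defining $5$-tuple that is independent of $x$. Write $P=(x,y,\al,\be,\om)$. By definition $d_A^\al(x,y)=e^{-b(P_1)-b(P_4)}$, where the face $P_1=(y,\al,\be,\om)$ carries no $x$, while $P_4=(x,y,\al,\om)$ does. For the second factor I use the scale triple $A'=(\al,\om,y)$, whose infinitely remote point is $y$, together with the $5$-tuple $P'=(x,\om,\al,\om,y)$; then $d_{A'}^\al(x,\om)=e^{-b(P'_1)-b(P'_4)}$ with $P'_1=(\om,\al,\om,y)$ and $P'_4=(x,\om,\al,y)$. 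Each exponent is well defined (no $\infty-\infty$) by Lemma~\ref{lem:well_defined_A}, because in both factors the second argument differs from the first entry of the relevant scale triple.

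Next I evaluate the two faces that are forced. Since $P'_1=(\om,\al,\om,y)$ has equal first and third entries, $M(P'_1)=(-\infty,0,\infty)$ and so $b(P'_1)=0$. The crucial point is that $P'_4=(x,\om,\al,y)$ is obtained from $P_4=(x,y,\al,\om)$ by the transposition $\pi=1432$ of the second and fourth entries. As $\phi(1432)=321$ fixes the middle coordinate while $\sign(1432)=-1$, equivariance (property (a)) gives $M(P'_4)=\sign(\pi)\,\phi(\pi)M(P_4)$, whose second coordinate is $b(P'_4)=-b(P_4)$. Multiplying the two factors, the $x$-dependent terms $b(P_4)$ and $b(P'_4)$ cancel and the vanishing $b(P'_1)$ drops out, so that
\[
d_A^\al(x,y)\cdot d_{A'}^\al(x,\om)=e^{-b(P_1)}=e^{-b(y,\al,\be,\om)}=:\la,
\]
which depends only on $y$ and $A$, and $\la>0$ since $b(P_1)$ is finite for $y\neq\al,\om$.

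The second identity runs on the same template, with the pair $a(P_1),b(P_3)$ in place of $b(P_1),b(P_4)$. From $d_A^\be(x,y)=e^{a(P_1)-b(P_3)}$ I take $A''=(\om,\be,y)$ and $P''=(x,\om,\om,\be,y)$, giving $d_{A''}^\be(x,\om)=e^{a(P''_1)-b(P''_3)}$. Here $P''_1=(\om,\om,\be,y)$ has equal first two entries, so $a(P''_1)=0$, while $P''_3=(x,\om,\be,y)$ is once again the $1432$-transposition of $P_3=(x,y,\be,\om)$, whence $b(P''_3)=-b(P_3)$ by the same equivariance fact. The $x$-dependent terms cancel and the product equals $\mu=e^{a(y,\al,\be,\om)}>0$.

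The routine work is the bookkeeping of which face degenerates and the extraction of the correct coordinate and sign from $\phi(1432)=321$ (this permutation fixes the middle coordinate and $\sign(1432)=-1$ reverses its sign, which is what drives both cancellations). The only genuine subtlety I anticipate is the degenerate configurations of $x$: for $x\in\{\al,\be\}$ the permuted faces are themselves degenerate with matching values, so the cancellation persists with finite entries; but at $x=y$ one has $d_A^\al(x,y)=0$ and $d_{A'}^\al(x,\om)=\infty$, an indeterminate product, so the identity is to be read for $x\neq y$. This is exactly the regime needed to compare balls in sect.~\ref{sect:submoebius_topology}, and there, by Lemma~\ref{lem:def_semi-metric}, both factors lie in $(0,\infty)$, so passing to logarithms is legitimate.
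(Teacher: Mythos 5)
Your proof is correct and takes essentially the same route as the paper: the same auxiliary 5-tuples $(x,\om,\al,\om,y)$ and $(x,\om,\om,\be,y)$, the forced values $b(P_1')=0$ (resp.\ $a(P_1'')=0$), and the cancellation $b(P_4')=-b(P_4)$ coming from equivariance under $\pi=1432$ with $\phi(\pi)=321$, $\sign(\pi)=-1$, yielding $\la=e^{-b(P_1)}$ and $\mu=e^{a(P_1)}$. Your closing remark that at $x=y$ the product degenerates to $0\cdot\infty$ (so the identity must be read for $x\neq y$) is a genuine edge case that the paper's proof passes over silently, and your check that $b(P_1)$, $a(P_1)$ remain finite, so $\la,\mu\in(0,\infty)$, is a worthwhile addition.
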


\begin{proof} We assume that
$y\neq\al,\om$
and the 5-tuple
$P=(x,y,\al,\be,\om)$
is admissible with
$x\in X_\om$.
Then the 5-tuple
$P'=(x,\om,\al,\om,y)$
is admissible. We have
$M(P_1')=M(\om,\al,\om,y)=(-\infty,0,\infty)$,
thus
$b(P_1')=0$.
Furthermore
$M(P_4')=M(x,\om,\al,y)=M(\pi P_4)$
for 
$\pi=1432$.
Since
$\phi(\pi)=321$, $\sign(\pi)=-1$,
we obtain
$b(P_4)+b(P_4')=0$.
Therefore
$$d_A^\al(x,y)\cdot d_{A'}^\al(x,\om)
  =\exp\{-(b(P_1)+b(P_4)+b(P_1')+b(P_4'))\}=\la,$$
where
$\la=e^{-b(P_1)}>0$
depends only on
$y$, $A$.

The proof of the second equality is similar with
$P'=(x,\om,\om,\be,y)$
and
$\mu=e^{a(P_1)}$.
\end{proof}

For a scale triple
$A=(\al,\be,\om)\in X^3$
and every
$t>0$
we set 
$$C_{A,t}^\al(y)=\set{x\in X}{$d_A^\al(x,y)>t$},\ y\neq\al,\om$$
and
$$C_{A,t}^\be(y)=\set{x\in X}{$d_A^\be(x,y)>t$},\ y\neq\be,\om.$$

\begin{lem}\label{lem:complement_subbase} The sets
$C_{A,t}^\al(y)$
for 
$y\neq\al,\om$
and
$C_{A,t}^\be(y)$
for 
$y\neq\be,\om$
are members of the subbase 
$\cB=\cB(M)$.
\end{lem}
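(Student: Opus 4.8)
I need to prove Lemma `lem:complement_subbase`, which states that the sets $C_{A,t}^\al(y)$ (for $y \neq \al, \om$) and $C_{A,t}^\be(y)$ (for $y \neq \be, \om$) are members of the subbase $\cB = \cB(M)$.

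Let me recall what's going on. The subbase $\cB(M)$ consists of all open $\al$-balls and $\be$-balls:
- $B_{A,t}^\al(y) = \{x : d_A^\al(x,y) < t\}$ for scale triples $A = (\al,\be,\om)$, $y \neq \al, \om$
- $B_{A,t}^\be(y) = \{x : d_A^\be(x,y) < t\}$ for scale triples $A = (\al,\be,\om)$, $y \neq \be, \om$

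The sets $C$ are "complements of balls" — they use strict inequality $>$ rather than $<$.

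**The key tool:** Lemma `lem:replace_scale`. This says:

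For $y \neq \al, \om$:
$$d_A^\al(x,y) \cdot d_{A'}^\al(x,\om) = \la$$
where $A' = (\al, \om, y)$ and $\la = \la(y,A) > 0$ is independent of $x$.

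For $y \neq \be, \om$:
$$d_A^\be(x,y) \cdot d_{A''}^\be(x,\om) = \mu$$
where $A'' = (\om, \be, y)$ and $\mu = \mu(y,A) > 0$ is independent of $x$.

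**My strategy:**The plan is to realize each ``complement-of-ball'' set as a genuine open ball with respect to a suitably rotated scale triple, using the multiplicative identities of Lemma~\ref{lem:replace_scale}. Concretely, I expect to prove the set equalities $C_{A,t}^\al(y)=B_{A',\la/t}^\al(\om)$ with $A'=(\al,\om,y)$, and $C_{A,t}^\be(y)=B_{A'',\mu/t}^\be(\om)$ with $A''=(\om,\be,y)$.

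Consider first $C_{A,t}^\al(y)$ with $y\neq\al,\om$, and put $A'=(\al,\om,y)$. For $x\in X_\om$ with $P=(x,y,A)$ admissible, Lemma~\ref{lem:replace_scale} gives $d_A^\al(x,y)\cdot d_{A'}^\al(x,\om)=\la$ with $\la=\la(y,A)>0$ independent of $x$. Since $\la,t>0$, this identity converts the defining condition $d_A^\al(x,y)>t$ into the equivalent condition $d_{A'}^\al(x,\om)<\la/t$; the equivalence persists in the degenerate regimes, since when $d_A^\al(x,y)=\infty$ the finiteness of $\la$ forces $d_{A'}^\al(x,\om)=0$ (and symmetrically), so both inequalities read the same way. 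Thus $C_{A,t}^\al(y)$ coincides with the open $\al$-ball centered at $\om$ for the scale triple $A'$. I then observe that $A'=(\al,\om,y)$ is a legitimate scale triple (its entries are pairwise distinct because $A$ is a scale triple and $y\neq\al,\om$), that its infinitely remote point is $y$, and that the center $\om$ differs from the first and third entries $\al$ and $y$; hence $B_{A',\la/t}^\al(\om)$ is an admissible member of the subbase $\cB$.

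The only point requiring genuine care is the set of $x$ not covered by the hypothesis of Lemma~\ref{lem:replace_scale}, namely $x=\om$ and $x=y$. For $x=\om$ one has $d_A^\al(\om,y)=\infty>t$ by Lemma~\ref{lem:def_semi-metric}, so $\om\in C_{A,t}^\al(y)$; on the other side $\om$ is the center of $B_{A',\la/t}^\al(\om)$, so $d_{A'}^\al(\om,\om)=0<\la/t$ and $\om$ lies in the ball as well. For $x=y$ one has $d_A^\al(y,y)=0$, so $y\notin C_{A,t}^\al(y)$, while $y$ is precisely the infinitely remote point of $d_{A'}$, whence $d_{A'}^\al(y,\om)=\infty$ by Lemma~\ref{lem:def_semi-metric} and $y$ lies outside the ball too. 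Every remaining $x$ satisfies $x\neq\om$ and $x\neq y$, which already forces $P=(x,y,A)$ to be admissible, so the multiplicative relation applies verbatim. This establishes the set equality on all of $X$.

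The claim for $C_{A,t}^\be(y)$ with $y\neq\be,\om$ is entirely parallel: using the second identity $d_A^\be(x,y)\cdot d_{A''}^\be(x,\om)=\mu$ of Lemma~\ref{lem:replace_scale} with $A''=(\om,\be,y)$, the same manipulation yields $C_{A,t}^\be(y)=B_{A'',\mu/t}^\be(\om)$, again a legitimate open $\be$-ball, since the center $\om$ differs from the second and third entries $\be$ and $y$ of $A''$, and the boundary points $x\in\{\om,y\}$ are handled exactly as before via Lemma~\ref{lem:def_semi-metric}. I expect the main bookkeeping obstacle to be precisely this verification at the boundary points $x=\om$ and $x=y$, where Lemma~\ref{lem:replace_scale} is silent and one must instead invoke the explicit boundary values recorded in Lemma~\ref{lem:def_semi-metric} to confirm that both descriptions agree.
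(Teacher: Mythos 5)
Your proof is correct and takes essentially the same route as the paper: both identify $C_{A,t}^\al(y)=B_{A',\la/t}^\al(\om)$ for $A'=(\al,\om,y)$ and $C_{A,t}^\be(y)=B_{A'',\mu/t}^\be(\om)$ for $A''=(\om,\be,y)$ via the multiplicative identities of Lemma~\ref{lem:replace_scale}, and both then dispose of the exceptional points where that lemma does not apply (the paper checks admissibility when $y=\be$, $x=\be$ and the value at $x=\om$; your explicit treatment of $x=\om$ and $x=y$ via Lemma~\ref{lem:def_semi-metric} covers the same cases).
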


\begin{proof} First we note that for 
$y\neq\al,\om$
a 5-tuple
$P=(x,y,\al,\be,\om)$
is admissible for every
$x\in C_{A,t}^\al(y)$.
Indeed, if
$y\neq\be$,
then
$P$
is admissible for every
$x\in X$.
Assume that
$y=\be$.
Then
$P$
is not admissible only for 
$x=\be$.
In that case
$d_A^\al(x,y)=d_A^\al(\be,\be)=0$
by definition. Hence
$P$
is admissible for every
$x\in C_{A,t}^\al(y)$.

Assume
$y\neq\al,\om$.
Then for the scale triple
$A'=(\al,\om,y)$
and
$x\in X_\om$
we have by Lemma~\ref{lem:replace_scale}
$d_A^\al(x,y)>t$
if and only if
$d_{A'}^\al(x,\om)<\la/t$
for 
$\la=\la(y,A)>0$,
and
$d_A^\al(\om,y)=\infty$, $d_{A'}^\al(\om,\om)=0$.
Thus
$C_{A,t}^\al(y)=B_{A',\la/t}^\al(\om)$
is a member of
$\cB$.
 
Similarly, for 
$t>0$, $y\neq\be,\om$, 
and the scale triple
$A''=(\om,\be,y)$
the set 
$C_{A,t}^\be(y)=B_{A'',\mu/t}^\be(\om)$,
where
$\mu=\mu(y,A)>0$,
is a member of
$\cB$.
\end{proof}

\begin{cor}\label{cor:dist_continuity} For every scale triple
$A=(\al,\be,\om)\in X^3$
the functions
$x\mapsto d_A^\al(x,y), d_A^\be(x,y)$
are continuous on
$X$
whenever they are well defined.
\end{cor}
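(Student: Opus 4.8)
The plan is to reduce continuity to checking preimages of a subbase of the target space $\wh\R$, and then to recognize these preimages as subbasic sets of the $M$-topology that have already been identified. Recall the standard fact that a map into a topological space is continuous as soon as the preimages of a subbase are open. I would equip $\wh\R=\R\cup\{\infty\}$ with its natural topology, for which the rays $\{s:s<t\}$ and $\{s:s>t\}$, $t\in\R$, form a subbase (neighborhoods of $\infty$ being the sets $(t,\infty]$). Since every value of $d_A^\al$ and $d_A^\be$ lies in $[0,\infty]$, only $t>0$ is relevant; for $t\le 0$ the preimages are $\es$ or $X$, which are open.

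First I would fix a scale triple $A=(\al,\be,\om)$ and a point $y\neq\al,\om$, and consider $f=d_A^\al(\cdot,y):X\to\wh\R$; this is well defined for all $x$ by Lemma~\ref{lem:well_defined_A} together with the convention $d_A^\al=0$ on inadmissible $5$-tuples. For $t>0$ I would read off the two preimages directly from the definitions: $f^{-1}(\{s:s<t\})=\set{x\in X}{$d_A^\al(x,y)<t$}$ is exactly the open $\al$-ball $B_{A,t}^\al(y)$, which is a member of the subbase $\cB=\cB(M)$ by construction; and $f^{-1}(\{s:s>t\})=\set{x\in X}{$d_A^\al(x,y)>t$}=C_{A,t}^\al(y)$, which is a member of $\cB$ by Lemma~\ref{lem:complement_subbase}. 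Here I would note that $\om\in C_{A,t}^\al(y)$, since $d_A^\al(\om,y)=\infty$ by Lemma~\ref{lem:def_semi-metric}, and $\om\notin B_{A,t}^\al(y)$, so these identifications hold on all of $X$, including at $\om$. Hence both preimages are open and $f$ is continuous.

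The $\be$-case is completely symmetric: for $y\neq\be,\om$ the preimages of the two rays under $d_A^\be(\cdot,y)$ are the ball $B_{A,t}^\be(y)\in\cB$ and the set $C_{A,t}^\be(y)\in\cB$, again by Lemma~\ref{lem:complement_subbase}. I would also dispose of the small inadmissibility cases (for instance $x=\be$ when $y=\be$ in the $\al$-function): there the convention $d_A^\al=0$ places $x$ inside every ball $B_{A,t}^\al(y)$, which is consistent with the set-theoretic identifications above.

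I do not expect a genuine obstacle here: essentially all the analytic content is already contained in Lemma~\ref{lem:complement_subbase}, which exhibits the ``exterior'' sets $C_{A,t}^\al$ and $C_{A,t}^\be$ as members of $\cB$. The only points requiring care are bookkeeping: choosing the correct subbase for $\wh\R$ so that neighborhoods of $\infty$ are handled, confirming that the two preimages coincide with $B$ and $C$ as subsets of all of $X$ (in particular the behaviour at $\om$ and on inadmissible tuples), and restricting to the ranges $y\neq\al,\om$ (resp.\ $y\neq\be,\om$) in which these are precisely the functions appearing in the definition of $\cB$ --- this is exactly what ``whenever they are well defined'' should be taken to mean.
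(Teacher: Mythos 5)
Your proposal is correct and is essentially the paper's own proof: the paper checks preimages of open intervals $I\sub[0,\infty]$, writing them as intersections $B_{A,s}^\al(y)\cap C_{A,t}^\al(y)$ (resp.\ the $\be$-versions) of subbase members and invoking Lemma~\ref{lem:complement_subbase}, while you check preimages of the two rays separately — the same argument packaged via a subbase of the target instead of a base. One small slip, which the paper's proof in fact shares: the preimage of the ray $\{s:s>0\}$ is $X\sm\{y\}$ rather than $X$ (since $d_A^\al(x,y)=0$ exactly when $x=y$ here), so you should take as your subbase of $[0,\infty]$ only the rays with $t>0$; these still generate its topology, so the argument is unaffected.
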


\begin{proof} The both functions take values in
$[0,\infty]$.
For any open interval
$I\sub[0,\infty]$
the inverse image is an intersection of two sets
$B_{A,s}^\al(y)\cap C_{A',t}^\al(y)$
for 
$A'=(\al,\om,y)$
or
$B_{A,s}^\be(y)\cap C_{A'',t}^\be(y)$
for 
$A''=(\om,\be,y)$
and appropriate 
$s$, $t\in[0,\infty]$
with 
$s>t$,
where the sets
$B_{A,s}^\al(y)$, $B_{A,s}^\be(y)$,
$C_{A',t}^\al(y)$, $C_{A'',t}^\be(y)$
coincide with 
$X$
for 
$s=\infty$
and
$t=0$.
By Lemma~\ref{lem:complement_subbase} these intersections are
open in the 
$M$-topology.
\end{proof}

\begin{pro}\label{pro:semi-metric_topology} Let
$M$
be a M\"obius structure on
$X$.
Then the semi-metric topology on
$X$
associated with 
$M$
coincides with the 
$M$-topology
associated with
$M$
regarded as a sub-M\"obius structure.
\end{pro}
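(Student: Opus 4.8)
The plan is to prove that the subbase of semi-metric open balls $B_{A,t}(y)$ defining the semi-metric topology and the subbase of $\al$- and $\be$-balls $B^\al_{A,t}(y)$, $B^\be_{A,t}(y)$ defining the sub-M\"obius $M$-topology are in fact the same collection of subsets of $X$; the two topologies then coincide because they have the same subbase.

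The essential input is an identification of the defining functions. Since $M$ is a M\"obius structure, for each scale triple $A=(\al,\be,\om)$ there is a unique semi-metric $d_A\in M$ with infinitely remote point $\om$ and $d_A(\al,\be)=1$, and $M=M_{d_A}$. First I would establish the identities
$$d_A^\al(x,y)=d_A(x,y)\ \text{ for }\ y\neq\al,\qquad d_A^\be(x,y)=d_A(x,y)\ \text{ for }\ y\neq\be,$$
valid for all $x\in X$. Writing $(u|v)=-\ln d_A(u,v)$ and using the formula for $M_{d_A}(Q)$ on a 4-tuple $Q$ whose last entry is the infinitely remote point $\om$ from Lemma~\ref{lem:moebius_restrict}, one reads off $b(P_1)$, $b(P_4)$, $a(P_1)$, $b(P_3)$ for the faces of $P=(x,y,\al,\be,\om)$. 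The intermediate Gromov products $(y|\al)$, resp.\ $(y|\be)$, cancel, leaving $-b(P_1)-b(P_4)=(\al|\be)-(x|y)$ and $a(P_1)-b(P_3)=(\al|\be)-(x|y)$. Since $(\al|\be)=-\ln d_A(\al,\be)=0$, both exponents equal $-(x|y)=\ln d_A(x,y)$, and the definitions of $d_A^\al$, $d_A^\be$ give the claimed identities; the degenerate and coincidence cases are covered by Lemma~\ref{lem:def_semi-metric}. Equivalently, one may invoke the construction in the proof of Theorem~\ref{thm:submoeb_moeb}: conditions (A), (B) hold for $M$, the semi-metric built there via $(\ref{eq:def_d_function})$ agrees with $d_A^\al$ for $y\neq\al$ and with $d_A^\be$ for $y\neq\be$ by definition, and it equals $d_A$ by uniqueness of the normalized representative.

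These identities immediately give equality of balls as sets,
$$B_{A,t}^\al(y)=B_{A,t}(y)\ \text{ for }\ y\neq\al,\om,\qquad B_{A,t}^\be(y)=B_{A,t}(y)\ \text{ for }\ y\neq\be,\om,$$
so every $\al$-ball and every $\be$-ball belongs to the semi-metric subbase. For the reverse inclusion I would use that $\al\neq\be$ in any scale triple: given a semi-metric ball $B_{A,t}(y)$ with $y\in X_\om$, at least one of $y\neq\al$, $y\neq\be$ holds, whence $B_{A,t}(y)$ equals the corresponding $\al$-ball or $\be$-ball and lies in the sub-M\"obius subbase. Thus the two subbases coincide, and so do the topologies.

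I do not anticipate a genuine obstacle; the substance is already contained in Theorem~\ref{thm:submoeb_moeb}. The only delicate point is the bookkeeping of domains of definition---in particular the centers $y=\al$ and $y=\be$, where only one of the two sub-M\"obius functions is available---so that no semi-metric ball is overlooked. This is handled by the elementary case split above, which relies solely on $\al$ and $\be$ being distinct entries of the scale triple.
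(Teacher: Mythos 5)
Your proposal is correct and follows essentially the same route as the paper: both identify $d_A^\al$ and $d_A^\be$ with the unique normalized representative $d_A\in M$ (infinitely remote $\om$, $d_A(\al,\be)=1$) by computing $b(P_1)$, $b(P_4)$, $a(P_1)$, $b(P_3)$ from the explicit form of $M_{d_A}$ on 4-tuples ending in $\om$, and then conclude that the two subbases coincide. Your extra care with the centers $y=\al$ and $y=\be$ (covered by the case split using $\al\neq\be$) only makes explicit a point the paper's final sentence glosses over.
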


\begin{proof} Let
$\cB_d$
be the subbase of the semi-metric topology,
$\cB$
the subbase of the 
$M$-topology
associated with
$M$
regarded as a sub-M\"obius structure.
We show that
$\cB_d=\cB$.

Recall, that for a scale triple
$A=(\al,\be,\om)\in X^3$
there is a uniquely determined semi-metric
$d$
with infinitely remote point
$\om$
such that 
$d(\al,\be)=1$.
We use notation
$(x|y)=-\ln d(x,y)$
for all
$x$, $y\in X$.

Consider an admissible 5-tuple
$P=(x,y,\al,\be,\om)$.
Then
\begin{align*}
M(P_1)&=(-(y|\be),(y|\al),(y|\be)-(y|\al)),\\ 
M(P_3)&=((y|\be)-(x|\be),(x|y)-(y|\be),(x|\be)-(x|y)),\\
M(P_4)&=((y|\al)-(x|\al),(x|y)-(y|\al),(x|\al)-(x|y)). 
\end{align*}

Thus for 
$y\neq\al,\om$
we have
$d_A^\al(x,y)=e^{-b(P_1)-b(P_4)}=d(x,y)$
and for 
$y\neq\be,\om$
we have
$d_A^\be(x,y)=e^{a(P_1)-b(P_3)}=d(x,y)$.
Hence open balls
$B_t(y)=B_{A,t}^\al(y)=B_{A,t}^\be(y)$.
Thus
$\cB_d=\cB$.
\end{proof}

\section{Hyperbolic spaces and sub-M\"obius structures}
\label{submoeb_hyperbolic}

\subsection{Canonical sub-M\"obius structure on $\di Y$} 
\label{subsect:canonical_submoeb}
Let
$Y$ 
be a Gromov hyperbolic space. We describe a canonical
sub-M\"obius structure on its boundary at infinity
$X=\di Y$.
Recall that the Gromov product of points
$y$, $y'\in Y$
with respect to a base point
$o\in Y$
is defined by
$$(y|y')_o=\frac{1}{2}(|yo|+|y'o|-|yy'|),$$
where
$|yy'|$
is the distance between
$y$, $y'$.

Given an admissible 4-tuple
$P=(\al,\be,\ga,\de)\in X^4$,
we take sequences
$\{a_i\}\in\al$, $\{b_i\}\in\be$, $\{c_i\}\in\ga$, $\{d_i\}\in\de$,
fix 
$o\in Y$
and consider the cross-difference
$$\cd(a_i,b_i,c_i,d_i)=(a_i|d_i)_o+(b_i|c_i)_o-(a_i|c_i)_o-(b_i|d_i)_o$$
(if some entry occurs in
$(\al,\be,\ga,\de)$
twice, then in both cases we take one and the same sequence representing
the entry). This expressions is independent of 
$o$,
and we put
$$\cd(\al,\be,\ga,\de)=\sup\liminf_i\cd(a_i,b_i,c_i,d_i),$$
where the supremum is taken over all sequences
$\{a_i\}\in\al$, $\{b_i\}\in\be$, $\{c_i\}\in\ga$, $\{d_i\}\in\de$.
Note that if
$(\al,\be,\ga,\de)$
is degenerate, then 
$\lim_i\cd((a_i,b_i,c_i,d_i)$
takes values
$0,\pm\infty$
and is well defined. Thus
$\cd(\al,\be,\ga,\de)$
is well defined, and it takes infinite values
$\pm\infty$
only if
$(\al,\be,\ga,\de)$
degenerates. For example,
$\cd(\al,\al,\ga,\de)=0$,
$\cd(\al,\be,\al,\de)=-\infty$
and
$\cd(\al,\be,\ga,\al)=\infty$.

Now, we put
$$\wt M(P)=(\wt u,\wt v,\wt w)\in\ov L_4,$$
where
$\wt u=\cd(\al,\be,\ga,\de)$, $\wt v=\cd(\al,\ga,\de,\be)$, 
$\wt w=-\wt u-\wt v$.
By properties of the Gromov product in hyperbolic spaces we have
$\wt M(P)\in L_4$
for every nondegenerate
$P$, $P\in\reg\cP_4$,
where
$\cP_4=\cP_4(X)$.
In the case
$P=(\al,\be,\ga,\de)$
is degenerate, we have by discussion above
\begin{align*}
 \wt M(\al,\al,\ga,\de)&=\wt M(\al,\be,\ga,\ga)=(0,\infty,-\infty)\\
 \wt M(\al,\be,\al,\de)&=\wt M(\al,\be,\ga,\be)=(-\infty,0,\infty)\\
 \wt M(\al,\be,\ga,\al)&=\wt M(\al,\be,\be,\de)=(\infty,-\infty,0),
\end{align*}
in particular,
$$\wt M(\pi(P))=\sign(\pi)\phi(\pi)\wt M(P)$$
for every degenerate
$P\in\cP_4$
and every
$\pi\in S_4$.

Finally, we define
$$M(P):=\frac{1}{|S_4|}\sum_{\rho\in S_4}\sign(\rho)\phi(\rho^{-1})\wt M(\rho(P)).$$
Then
$M:\cP_4\to\ov L_4$
is equivariant with respect to the signed cross-ratio homomorphism,
$$M(\pi(P))=\sign(\pi)\phi(\pi)M(P)$$
for every
$P\in\cP_4$, $\pi\in S_4$,
and
$M(P)=\wt M(P)$
for every degenerate
$P\in\cP_4$.
Therefore 
$M$
is a sub-M\"obius structure on
$X=\di Y$,
which is called the {\em canonical} sub-M\"obius structure
associated with
$Y$.
Though 
$M$
is defined by fixing
$o\in Y$,
it is independent of the choice of
$o$
and hence
$M$
is invariant under the action of any isometry group of
$Y$, $M(\ga(P))=M(P)$
for every isometry
$\ga:Y\to Y$
extended to
$\di Y$
and every
$P\in\cP_4$.

\subsection{Subbases of the standard topology on $\di Y$}

The Gromov product of points
$\al$, $\be\in X$
with respect to
$o\in Y$
is defined as
$$(\al|\be)_o=\inf\liminf_i(a_i|b_i)_o,$$
where the infimum is taken over all sequences
$\{a_i\}\in\al$, $\{b_i\}\in\be$.
For 
$x\in X$, $t>0$
the sets
$$U_{t,o}(x)=\set{y\in X}{$(x|y)_o>t$}$$
form for every
$o\in Y$
the base of neighborhoods of
$x$
in the {\em standard} topology on
$X=\di Y$
with subbase
$\cB_o=\set{U_{t,o}(x)}{$x\in X,t>0$}$.
This topology does not depend on the choice of
$o$.

Instead of the Gromov product on
$X$
with respect to
$o\in Y$,
it is convenient to use the Gromov product with respect to a scale triple
$A=(\al,\be,\om)\in X^3$,
which is defined as follows
$$(x|y)_{A,o}:=(x|y)_\om-(\al|\be)_\om,$$
where
$(x|y)_\om:=(x|y)_o-(x|\om)_o-(y|\om)_o$.
This corresponds to the metric inversion
$$d_{A,o}(x,y)=r^2d_\om(x,y)=\frac{r^2d_o(x,y)}{d_o(x,\om)d_o(y,\om)}$$
with
$r^2=1/d_\om(\al,\be)$,
where
$d_o(x,y)=e^{-(x|y)_o}$, $d_{A,o}(x,y)=e^{-(x|y)_{A,o}}$.

\begin{lem}\label{lem:metric_inversion_standard_topology} For every
$o\in Y$
the sets 
$U_{A,t,o}(y)=\set{x\in X}{$(x|y)_{A,o}>t$}$, $A=(\al,\be,\om)\in X^3$
is a scale triple,
$y\in X_\om$, $t>0$,
form a subbase of the standard topology on
$X=\di Y$.
\end{lem}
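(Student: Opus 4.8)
The plan is to prove that the collection $\cB'=\set{U_{A,t,o}(y)}{$A$ a scale triple, $y\in X_\om$, $t>0$}$ generates exactly the standard topology $\tau$, whose subbase is $\cB_o=\set{U_{t,o}(x)}{$x\in X$, $t>0$}$. The guiding observation is that, writing $d_o(x,y)=e^{-(x|y)_o}$, the defining inequality unwinds to
$U_{A,t,o}(y)=\set{x\in X}{$d_o(x,y)<\mu\,d_o(x,\om)$}$ with $\mu=e^{-t-(y|\om)_o-(\al|\be)_\om}>0$;
thus $U_{A,t,o}(y)$ is, up to the fixed scale, a ball of the metric inversion $d_{A,o}(x,y)=r^2d_o(x,y)/(d_o(x,\om)d_o(y,\om))$ centered at $y$, equivalently a ball of the Gromov product based at the boundary point $\om$. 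The lemma therefore asserts that passing from the interior basepoint $o$ to the boundary basepoint $\om$ leaves the topology on $X_\om=X\sm\{\om\}$ unchanged, and that letting $\om$ vary recovers all of $\tau$. I will use throughout that boundary Gromov products are nonnegative (so $d_o\le 1$), that $d_o(p,\om)>0$ for $p\neq\om$, the hyperbolicity inequality $(x|z)_o\ge\min\{(x|y)_o,(y|z)_o\}-\de$, the stated independence of $\tau$ from the interior basepoint, and the stated fact that $\set{U_{t,o}(p)}{$t>0$}$ is a $\tau$-neighborhood basis at $p$.

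For the inclusion $\tau\sub\tau'$ I show every $\tau$-open set is a $\tau'$-neighborhood of each of its points. Fix $p$ and any scale triple $A=(\al,\be,\om)$ with $\om\neq p$. From the rewriting above,
$U_{A,s,o}(p)=\set{x}{$d_o(x,p)<\la\,d_o(x,\om)$}$ with $\la=e^{-s-(p|\om)_o-(\al|\be)_\om}$,
so $d_o\le 1$ gives $U_{A,s,o}(p)\sub\set{x}{$d_o(x,p)<\la$}=U_{r,o}(p)$ with $r=-\ln\la\to\infty$ as $s\to\infty$; moreover $p\in U_{A,s,o}(p)$ since $d_o(p,p)=0<\la\,d_o(p,\om)$. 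Given a $\tau$-neighborhood $V\ni p$, it contains some $U_{t,o}(p)$; choosing $s$ with $r\ge t$ yields the $\tau'$-subbase element $U_{A,s,o}(p)$ with $p\in U_{A,s,o}(p)\sub U_{t,o}(p)\sub V$. Hence $V$ is a $\tau'$-neighborhood of $p$, and $\tau\sub\tau'$. (This direction is clean precisely because the bound $d_o\le 1$ produces a containment in a $d_o$-ball \emph{without} incurring the quasi-metric constant.)

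For the reverse inclusion $\tau'\sub\tau$ I must show each $U_{A,t,o}(y)$ is $\tau$-open; note $\om\notin U_{A,t,o}(y)\sub X_\om$. The route is to prove that on $X_\om$ the inverted quasi-metric $d_{A,o}$ induces the subspace standard topology and that its balls are $\tau$-open. The local input is a comparison: fixing a point and using the hyperbolicity inequality together with $0<d_o(\cdot,\om)\le 1$, one shows $d_o(x,\om)\asymp d_o(p,\om)$ for $x$ in a small $d_o$-ball about $p$, whence $d_{A,o}(x,y)$ and $d_o(x,y)$ are comparable up to constants near each fixed point. To upgrade this to genuine $\tau$-openness I would approximate $\om$ by interior points $o'$ on a ray tending to $\om$: by the stated basepoint-independence each $U_{t,o'}(z)$ is $\tau$-open, while the hyperbolicity inequality controls $(x|y)_{o'}$ by $(x|y)_\om$ up to a bounded error, and one sandwiches $U_{A,t,o}(y)$ between such $o'$-balls to conclude it is $\tau$-open. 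Since the inversion point $\om$ occurs as the third entry of arbitrary scale triples, these boundary-based balls are available around every point of $X$, so $\cB'$ generates all of $\tau$.

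The main obstacle is exactly this openness of the inverted balls. A naive argument — take a small $d_o$-ball about a point $x_0\in U_{A,t,o}(y)$ and push it through the quasi-metric triangle inequality for $d_{A,o}$ — loses a multiplicative factor $K=e^{\de}$ and therefore fails for points lying in the outer shell $\mu K^{-1}d_o(x_0,\om)\le d_o(x_0,y)<\mu\,d_o(x_0,\om)$; recentering the $d_o$-ball does not help, since any $d_o$-ball containing $x_0$ contains one centered at $x_0$. Thus ball-openness is not a formal consequence of the quasi-metric structure alone, and the genuinely hyperbolic input is the basepoint-independence of the visual topology \emph{extended to basepoints at infinity}. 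The local comparison $d_{A,o}\asymp d_o$ is what makes this extension effective, reducing the global inversion to a bi-Lipschitz (up to constants) change of quasi-metric near each point and so trivializing the topological identification once the interior-basepoint invariance is invoked.
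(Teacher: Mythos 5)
Your first half is fine: the rewriting $U_{A,s,o}(p)=\{x\in X:\, d_o(x,p)<\la\, d_o(x,\om)\}$ together with $d_o\le 1$ and the stated neighborhood-basis property of the sets $U_{t,o}(p)$ correctly gives $\tau\sub\tau'$, and it is arguably cleaner than the paper's treatment of that direction (which uses the comparison (\ref{eq:subbase_compare}) and $\tau$-continuity of $x\mapsto(x|y)_o$). The genuine gap is the other direction: you never prove that $U_{A,t,o}(y)$ is $\tau$-open; you only sketch a strategy, and the strategy cannot be completed. Every ingredient you propose --- the local comparison $d_{A,o}\asymp d_o$, the approximation of $\om$ by interior points $o'$ on a ray, the sandwiching of $U_{A,t,o}(y)$ between $o'$-balls --- is a comparison with an error controlled by the hyperbolicity constant $h$, and that error does not tend to zero as $o'\to\om$. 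By your own ``outer shell'' analysis, bounded-error comparisons can never certify openness: for a point $x$ with $t<(x|y)_{A,o}\le t+2h$, any ball around $x$ that is controlled only up to an error of order $h$ may leak out of $U_{A,t,o}(y)$; and a set sandwiched between two open sets need not be open. (Equivalently: up-to-constants equivalence of quasi-metrics identifies the neighborhood \emph{filters} generated by balls, but not the topologies generated by balls as a \emph{subbase}, which is exactly the distinction at stake here.) Worse, the fact you ultimately lean on, ``basepoint-independence of the visual topology extended to basepoints at infinity,'' is not among the available facts; in the general, non--boundary-continuous case it is essentially a restatement of the lemma being proved, so invoking it is circular.

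The idea you are missing is that the shell problem must be defeated by \emph{exact} identities rather than coarse ones, and in this paper these come from the M\"obius structure $M_o$, not from hyperbolicity estimates. Since $M_o$ is a genuine M\"obius structure, Lemma~\ref{lem:replace_scale} gives the exact relation $d_A^\al(x,y)\cdot d_{A'}^\al(x,\om)=\la$ with $A'=(\al,\om,y)$ and $\la$ independent of $x$; hence the complement of a closed ball centered at $y$ is itself a subbase element of $\tau'$ (an open ball centered at $\om$ for the scale triple $A'$, Lemma~\ref{lem:complement_subbase}), and therefore $z\mapsto d_{A,o}(z,y)$ is continuous in $\tau'$ exactly, with no additive loss (Corollary~\ref{cor:dist_continuity}). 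This is what lets the paper place, around every $x\in U_{A,t,o}(y)$, a ball $U_{A,s,o}(x)\sub U_{A,t,o}(y)$ of the same quasi-metric with no error, after which the large-radius, same-center comparison (\ref{eq:subbase_compare}) converts it into a standard ball $U_{s+c,o}(x)\sub U_{A,t,o}(y)$. So the correct mechanism is to vary the scale triple and exploit exact M\"obius inversion, rather than to approximate the boundary basepoint by interior ones; the latter can only ever produce the up-to-$h$ statements that you yourself showed are insufficient.
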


\begin{proof} The Gromov product
$(x|x')_o$
on
$X$
satisfies
$h$-inequality,
where
$h\ge 0$
is the hyperbolicity constant of
$Y$,
that is, for arbitrary
$x$, $y$, $z\in X$
two lowest members of the triple
$(x|y)_o$, $(x|z)_o$, $(y|z)_o$
differ from each other at most by
$h$,
see \cite[Lemma~2.2.2]{BS}. Thus for a fixed scale triple
$A=(\al,\be,\om)\in X^3$, $y\in X_\om$
and 
$t>(y|\om)_o+h$
we have
$|(x|\om)_o-(y|\om)_o|\le h$
for every
$x\in X$
with
$(x|y)_o>t$.
It follows that 
$$(x|y)_{A,o}=(x|y)_o-(x|\om)_o-(y|\om)_o-(\al|\be)_\om\ge (x|y)_o-c$$
for 
$c=2(y|\om)_o+|(\al|\be)_\om|+h$.
Since
$(x|y)_{A,o}\le(x|y)_o-(\al|\be)_\om\le(x|y)_o+c$,
we obtain
\begin{equation}\label{eq:subbase_compare}
U_{A,t+c,o}(y)\sub U_{t,o}(y)\sub U_{A,t-c,o}(y)
\end{equation}
for every
$t>(y|\om)_o+h$.

Let
$\tau$
be the standard topology on
$X$, $\tau'$
the topology with the subbase formed by the sets
$U_{A,t,o}$.
By Corollary~\ref{cor:dist_continuity}, the function
$x\mapsto d_{A,o}(x,y)$
is continuous on
$X$
in
$\tau'$
for every
$y\in X_\om=X\sm\{\om\}$.
In particular,
$(z|y)_{A,o}\to (x|y)_{A,o}$
as
$z\to x$
in
$\tau'$.
Thus for every
$x\in U_{A,t,o}(y)$
there is
$s>t$
such that 
$U_{A,s,o}(x)\sub U_{A,t,o}(y)$.
Using (\ref{eq:subbase_compare}), we obtain
$U_{s+c,o}(x)\sub U_{A,t,o}(y)$.
That is, every set in
$X$
open in
$\tau'$
is open in the standard topology. 

In the opposite direction, let
$t_0=\inf(x|y)_o$,
where the infimum is taken over all
$x$, $y\in X$.
Given
$y\in X$, $t>t_0+h$,
we take a scale triple
$A=(\al,\be,\om)\in X^3$
with 
$t>(y|\om)_o+h$.
Then 
$\om\not\in U_{t,o}(y)$.
The function
$x\mapsto(x|y)_o$
on
$X$
is continuous in
$\tau$,
thus for every
$x\in U_{t,o}(y)$
there is
$s>t$
such that 
$U_{s,o}(x)\sub U_{t,o}(y)$.
Using (\ref{eq:subbase_compare}), we obtain
$U_{A,s+c,o}(x)\sub U_{t,o}(y)$,
where
$c=|(\al|\be)_\om|$.
This shows that
$\tau'=\tau$.
\end{proof}

\subsection{Sub-M\"obius structure $M$ and the standard topology}

\begin{pro}\label{pro:canonical_submoeb_hyperbolic} Let
$Y$
be a Gromov hyperbolic space. For the canonical
sub-M\"obius structure
$M$
on
$X=\di Y$, 
the 
$M$-topology
coincides with the standard topology of
$X$.
\end{pro}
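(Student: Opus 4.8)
The plan is to show the two subbases generate the same topology by relating the canonical sub-M\"obius balls to the standard Gromov-product neighborhoods $U_{A,t,o}$ of Lemma~\ref{lem:metric_inversion_standard_topology}. The key point is that the sub-M\"obius functions $d_A^\al,d_A^\be$ are built from the cross-difference $\wt M$, which in turn is assembled from Gromov products on $Y$; so I expect each open $\al$- or $\be$-ball to be comparable, up to bounded additive error in the exponent, to a set of the form $U_{A,t,o}(y)$. Since the latter form a subbase of the standard topology by Lemma~\ref{lem:metric_inversion_standard_topology}, a two-sided containment argument will identify the two topologies.

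Concretely, I would first fix $o\in Y$ and a scale triple $A=(\al,\be,\om)$, and compute $d_A^\al(x,y)=e^{-b(P_1)-b(P_4)}$ and $d_A^\be(x,y)=e^{a(P_1)-b(P_3)}$ in terms of the cross-differences $\cd$ defining $\wt M$, using the averaging formula $M(P)=\frac{1}{|S_4|}\sum_{\rho}\sign(\rho)\phi(\rho^{-1})\wt M(\rho(P))$. The main quantity to control is the discrepancy between the $\sup\liminf$ defining $\cd$ and the naive Gromov-product expression $(x|y)_{A,o}$. Because $Y$ is only Gromov hyperbolic (not $\CAT(-1)$), these differ by an amount bounded in terms of the hyperbolicity constant $h$, exactly as in the $h$-inequality already exploited in the proof of Lemma~\ref{lem:metric_inversion_standard_topology}. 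I therefore expect an estimate of the shape
\begin{equation*}
(x|y)_{A,o}-c\le -\ln d_A^\al(x,y)\le (x|y)_{A,o}+c
\end{equation*}
for a constant $c=c(A,o,y)$ independent of $x$, and the analogous bound for $d_A^\be$.

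Granting such a two-sided comparison, the topological conclusion is routine: each ball $B_{A,t}^\al(y)=\{d_A^\al(x,y)<t\}$ is sandwiched between two sets $U_{A,s,o}(y)$ and $U_{A,s',o}(y)$, and conversely, so the subbases are mutually refining and generate one and the same topology, namely the standard one by Lemma~\ref{lem:metric_inversion_standard_topology}. I would invoke Corollary~\ref{cor:dist_continuity} to pass from the pointwise comparison to genuine openness, exactly as in the second half of that lemma's proof, where continuity of $x\mapsto d_{A,o}(x,y)$ was used to inflate a value-inequality into a neighborhood containment.

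The hard part will be the bookkeeping in the first step: the $\sup\liminf$ in the definition of $\cd(\al,\be,\ga,\de)$ does not behave like an honest Gromov product, so one must show that the particular symmetric combinations $-b(P_1)-b(P_4)$ and $a(P_1)-b(P_3)$ that survive the $S_4$-average collapse (up to the uniform error $c$) to the metric-inversion quantity $(x|y)_{A,o}$. Verifying that the various $\sup\liminf$ terms can be estimated simultaneously by choosing near-optimal sequences, and that the resulting slack is absorbed into a constant independent of $x$, is where the hyperbolicity hypothesis does its real work; everything after that is a formal consequence of Lemmas~\ref{lem:complement_subbase}, \ref{lem:metric_inversion_standard_topology} and Corollary~\ref{cor:dist_continuity}.
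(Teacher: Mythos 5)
Your proposal is correct and follows essentially the same route as the paper: fix $o\in Y$, compare the sub-M\"obius quantities $-b(P_1)-b(P_4)$ and $b(P_3)-a(P_1)$ with $(x|y)_{A,o}$ up to an additive error controlled by the hyperbolicity constant $h$, then sandwich the balls and upgrade the pointwise comparison to equality of topologies via Lemma~\ref{lem:metric_inversion_standard_topology} and Corollary~\ref{cor:dist_continuity}. The ``hard part'' you defer is precisely the paper's Lemma~\ref{lem:submoeb_omoeb}, which packages the comparison as $|M(P)-M_o(P)|\le 10h$ for the auxiliary M\"obius structure $M_o$ of Remark~\ref{rem:o_moeb_structure} (no simultaneous choice of near-optimal sequences is needed, since the $h$-inequality bounds each cross-difference term for arbitrary sequences), combined with the exact identity $b_o(P_1)+b_o(P_4)=b_o(P_3)-a_o(P_1)=(x|y)_{A,o}$.
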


\begin{rem}\label{rem:boundary_continuous} Assume
$Y$
is {\em boundary continuous}, that is for every
$\al$, $\be\in\di X$, $o\in X$
and arbitrary sequences
$\{a_i\}\in\al$, $\{b_i\}\in\be$
there exists the limit
$\lim_i(a_i|b_i)_o=(\al|\be)_o$.
For example, every
$\CAT(-1)$
is boundary continuous, see \cite[sect.~3.4.2]{BS}. It follows that 
$$\cd(\al,\be,\ga,\de)=(\al|\de)_o+(\be|\ga)_o-(\al|\ga)_o-(\be|\de)_o$$
for every admissible 4-tuple
$P=(\al,\be,\ga,\de)\sub X$.
Then
$\wt M:\cP_4\to\ov L_4$,
where
$\wt M$
is defined in sect.~\ref{subsect:canonical_submoeb},
is equivariant with respect to the signed cross-ratio homomorphism
and hence the canonical sub-M\"obius structure
$M=\wt M$
is actually M\"obius structure associated with the semi-metric
$d_o(\al,\be)=e^{-(\al|\be)_o}$.
In this case the semi-metric topology on
$X$
coincides with the standard topology, and 
Proposition~\ref{pro:canonical_submoeb_hyperbolic} follows from
Proposition~\ref{pro:semi-metric_topology}. In a general case, there is
no reason to assume that
$Y$
is boundary continuous, and respective complications are discussed
in the proof below.
\end{rem}

\begin{rem}\label{rem:o_moeb_structure} In general case, we define for 
a fixed
$o\in Y$
the cross-difference
$$\cd_o(\al,\be,\ga,\de):=(\al|\de)_o+(\be|\ga)_o-(\al|\ga)_o-(\be|\de)_o$$
for any admissible 4-tuple
$P=(\al,\be,\ga,\de)$
in
$X$.
Setting
$M_o(P)=(u,v,w)$,
where
\begin{align*}
u&=\cd_o(\al,\be,\ga,\de)\\ 
v&=\cd_o(\al,\ga,\de,\be)\\
w&=\cd_o(\be,\ga,\al,\de), 
\end{align*}
we obtain a M\"obius structure 
$M_o$
on
$X$
associated with the semi-metric
$d_o(x,y)=e^{-(x|y)_o}$.
The respective semi-metric topology on
$X$
is the standard topology. However, disadvantage is that 
$M_o$
depends on
$o$,
and there is no reason for isometries of
$Y$
to preserve
$M_o$,
which leads to a quasification.
\end{rem}

\begin{lem}\label{lem:submoeb_omoeb} Let
$Y$
be 
$h$-hyperbolic
with 
$h\ge 0$.
Then for every admissible 4-tuple
$P\in\cP_4$
we have
$|M(P)-M_o(P)|\le 10h$.
\end{lem}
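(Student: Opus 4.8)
The plan is to route both structures through the fact that $M_o$ is already a genuine Möbius structure (Remark~\ref{rem:o_moeb_structure}), hence equivariant, so that the symmetrization defining $M$ adds no error and the whole estimate collapses to a single cross-difference comparison. If $Q$ is degenerate then $\wt M(Q)$ and $M_o(Q)$ take the same value among $A,B,C$ and there is nothing to prove, so I assume $Q\in\reg\cP_4$ throughout and work inside $L_4\sub\R^3$ with the coordinatewise (sup) norm.

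\emph{Symmetrization reduction.} Since $M_o$ is the Möbius structure of the semi-metric $d_o(x,y)=e^{-(x|y)_o}$, it satisfies $M_o(\rho Q)=\sign(\rho)\phi(\rho)M_o(Q)$ for all $\rho\in S_4$; as $\phi$ is a homomorphism and $\sign(\rho)^2=1$, this gives $\sign(\rho)\phi(\rho^{-1})M_o(\rho P)=M_o(P)$, and averaging over $S_4$ yields
\[
M_o(P)=\frac{1}{|S_4|}\sum_{\rho\in S_4}\sign(\rho)\phi(\rho^{-1})M_o(\rho P).
\]
Subtracting this from the definition of $M(P)$ I obtain
\[
M(P)-M_o(P)=\frac{1}{|S_4|}\sum_{\rho\in S_4}\sign(\rho)\phi(\rho^{-1})\bigl(\wt M(\rho P)-M_o(\rho P)\bigr).
\]
Each operator $\sign(\rho)\phi(\rho^{-1})$ is a signed coordinate permutation of $\ov L_4$, hence an isometry for the sup norm, so the average cannot exceed $\max_{\rho}|\wt M(\rho P)-M_o(\rho P)|$. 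It therefore suffices to bound $|\wt M(Q)-M_o(Q)|\le 10h$ for every admissible $Q=(\al,\be,\ga,\de)$.

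\emph{Reduction to a cross-difference estimate.} Write $\wt M(Q)=(\wt u,\wt v,\wt w)$ and $M_o(Q)=(u,v,w)$. The first two coordinates compare $\cd$ (defined as a $\sup\liminf$ of the $o$-independent cross-difference) with $\cd_o$ (built from the boundary products $(\cdot|\cdot)_o=\inf\liminf$). For the third I use $\wt w=-\wt u-\wt v$ and $w=-u-v$, so that $|\wt w-w|\le|\wt u-u|+|\wt v-v|$ and this coordinate is the extremal one. Thus the entire statement follows once I show $|\cd(\al,\be,\ga,\de)-\cd_o(\al,\be,\ga,\de)|\le 5h$ for a single ordering, whence the $w$-coordinate contributes at most $10h$.

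\emph{The hyperbolic estimate.} Fix approximating sequences $\{a_i\}\in\al,\dots,\{d_i\}\in\de$. Applying the $h$-inequality for the extended Gromov product on $\ov Y$ \cite{BS} first to the triple $(\al,a_i,\de)$, where $(\al|a_i)_o\to\infty$, and then to $(\de,d_i,a_i)$, gives $|(a_i|d_i)_o-(\al|\de)_o|\le 2h$ for large $i$, and likewise for the three other pairs. Since $\cd(a_i,b_i,c_i,d_i)=(a_i|d_i)_o+(b_i|c_i)_o-(a_i|c_i)_o-(b_i|d_i)_o$, passing to the $\liminf$ and then the $\sup$ over sequences controls $\wt u-u$ by the signed sum of these four errors. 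The naive bound $8h$ is too weak for the stated constant, so the quantitative heart is to group the four errors into the two ``shared first point'' differences $[(a_i|d_i)-(a_i|c_i)]-[(\al|\de)-(\al|\ga)]$ and its $\be$-analogue, and to use that passing to a deep approximant shifts $(\,\cdot\mid\de)_o$ and $(\,\cdot\mid\ga)_o$ by the same amount up to $h$ (a Busemann-type alignment). This cancellation, together with the $\le 2h$ oscillation of each product along a single sequence, keeps the discrepancy at $\le 5h$.

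\emph{Main obstacle.} The delicate point is exactly this constant bookkeeping: $\liminf$ is not additive and the four Gromov products share their approximating sequences, so one cannot optimize them independently, and one must simultaneously absorb the mismatch between the $\sup\liminf$ defining $\wt M$ and the $\inf$ defining the boundary products in $M_o$ into the same $h$-budget. Everything else---the symmetrization reduction and the reduction to one cross-difference---is formal once the equivariance of $M_o$ and the isometry property of the signed permutations are in hand.
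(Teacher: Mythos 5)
Your symmetrization reduction (using equivariance of $M_o$ to write $M(P)-M_o(P)$ as an average of signed-permuted differences $\wt M(\rho P)-M_o(\rho P)$, then using that signed coordinate permutations are isometries) is exactly the paper's final step, and it is fine. The gap is in what you call the quantitative heart: you never prove the bound $|\cd(\al,\be,\ga,\de)-\cd_o(\al,\be,\ga,\de)|\le 5h$. The ``Busemann-type alignment'' claim --- that passing to a deep approximant shifts $(\,\cdot\,|\de)_o$ and $(\,\cdot\,|\ga)_o$ by the same amount up to $h$ --- is not a stated lemma, is not proved, and is not obviously true; since the whole lemma reduces (by your own first two steps) to precisely this estimate, the proposal as written does not constitute a proof.

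Moreover, the obstacle you are fighting is self-inflicted. By symmetrizing the basic estimate into the two-sided form $|(a_i|d_i)_o-(\al|\de)_o|\le 2h$, you discard the one-sidedness of \cite[Lemma~2.2.2]{BS}, which says that for \emph{arbitrary} sequences
\begin{equation*}
(\al|\de)_o\le\liminf_i\,(a_i|d_i)_o\le\limsup_i\,(a_i|d_i)_o\le(\al|\de)_o+2h .
\end{equation*}
Feeding this into the cross-difference with signs taken into account, only the two positively-signed products can overshoot their boundary values (by at most $2h$ each), while the two negatively-signed ones cannot undershoot theirs; hence
\begin{equation*}
\liminf_i\cd(a_i,b_i,c_i,d_i)\le\limsup_i[(a_i|d_i)_o+(b_i|c_i)_o]-\liminf_i[(a_i|c_i)_o+(b_i|d_i)_o]\le\cd_o(\al,\be,\ga,\de)+4h
\end{equation*}
for all sequences, so $\cd\le\cd_o+4h$, and the symmetric argument gives $\cd_o\le\cd+4h$. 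This handles simultaneously the $\sup\liminf$ in the definition of $\wt M$ and the $\inf\liminf$ in the definition of the boundary products, with no cancellation tricks. This $4h$ bound per cross-difference is exactly how the paper proceeds, and it makes your machinery unnecessary: with $|\wt u-u|,|\wt v-v|\le 4h$ and $|\wt w-w|\le 8h$, your own symmetrization step finishes the proof (sup norm at most $8h$, or Euclidean norm at most $\sqrt{96}\,h<10h$ as in the paper). So the correct diagnosis is not that the naive bound $8h$ must be improved by cancellation, but that the naive bound, done with the correct one-sided inequality, is already $4h$.
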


\begin{proof} Assume
$P=(\al,\be,\ga,\de)$.
Using that 
$$(\al|\be)_o\le\liminf_i(a_i|b_i)_o\le\limsup_i(a_i|b_i)_o\le(\al|\be)_o+2h$$
for arbitrary sequences
$\{a_i\}\in\al$, $\{b_i\}\in\be$,
see \cite[Lemma~2.2.2]{BS},
we obtain
\begin{align*}
\liminf_i\cd(a_i,b_i,c_i,d_i)&\le\limsup_i[(a_i|d_i)_o+(b_i|c_i)_o]
 -\liminf_i[(a_i|c_i)_o+(b_i|d_i)_o]\\
 &\le(\al|\de)_o+(\be|\ga)_o+4h-(\al|\ga)_o-(\be|\de)_o\\
 &\le\cd_o(\al,\be,\ga,\de)+4h 
\end{align*}
for arbitrary sequences
$\{a_i\}\in\al$, $\{b_i\}\in\be$, $\{c_i\}\in\ga$, $\{d_i\}\in\de$,
and hence
$\cd(\al,\be,\ga,\de)\le\cd_o(\al,\be,\ga,\de)+4h$. 

In the opposite direction, we have
$$(\al|\de)_o+(\be|\ga)_o\le\liminf_i[(a_i|d_i)_o+(b_i|c_i)_o]$$
and
$$(\al|\ga)_o+(\be|\de)_o\ge\limsup_i[(a_i|c_i)_o+(b_i|d_i)_o]-4h.$$
Hence
\begin{align*}
\cd_o(\al,\be,\ga,\de)&\le\liminf_i[(a_i|d_i)_o+(b_i|c_i)_o]
                         -\limsup_i[(a_i|c_i)_o+(b_i|d_i)_o]+4h\\
                      &\le\liminf_i\cd(a_i,b_i,c_i,d_i)+4h 
\end{align*}
for arbitrary sequences
$\{a_i\}\in\al$, $\{b_i\}\in\be$, $\{c_i\}\in\ga$, $\{d_i\}\in\de$.
Thus
$\cd_o(\al,\be,\ga,\de)\le\cd(\al,\be,\ga,\de)+4h$.
It follows that
$$|M_o(P)-\wt M(P)|^2\le|u-\wt u|^2+|v-\wt v|^2+|\wt u+\wt v-(u+v)|^2\le 96h^2,$$
where
$M_o(P)=(u,v,w)$, $\wt M(P)=(\wt u,\wt v,-\wt u-\wt v)$.
Since
$M_o$
is equivariant with respect to the signed cross-ratio homomorphism, we have
$$M(P)-M_o(P)=\frac{1}{|S_4|}\sum_{\rho\in S_4}
              \sign(\rho)\phi(\rho^{-1})(\wt M(\rho(P))-M_o(\rho(P))).$$
Hence
$|M(P)-M_o(P)|\le h\sqrt{96}<10h$
for every
$P\in\cP_4$.
\end{proof}

\begin{proof}[Proof of Proposition~\ref{pro:canonical_submoeb_hyperbolic}] For 
$o\in Y$, $t>0$,
a scale triple
$A=(\al,\be,\om)\in X^3$
and
$y\in X_\om$
we denote
$B_{A,t,o}(y)=\set{x\in X}{$d_{A,o}(x,y)<t$}$.

Then
$B_{A,t,o}(y)=U_{A,\ln\frac{1}{t},o}(y)$
because
$d_{A,o}(x,y)=e^{-(x|y)_{A,o}}$.
Thus by Lemma~\ref{lem:metric_inversion_standard_topology}
the sets
$B_{A,t,o}(y)$, $A=(\al,\be,\om)\in X^3$
is a scale triple,
$y\in X_\om$, $t>0$,
form a subbase of the standard topology on
$X=\di Y$.

Given an admissible 5-tuple
$P=(x,y,\al,\be,\om)\in X^4$
with a scale triple
$A=(\al,\be,\om)$,
we have 
\begin{align*}
a_o(P_1)&=M_o(y,\al,\be,\om)=(y|\om)_o+(\al|\be)_o-(y|\be)_o-(\al|\om)_o\\ 
b_o(P_1)&=M_o(y,\be,\om,\al)=(y|\al)_o+(\be|\om)_o-(y|\om)_o-(\al|\be)_o\\
b_o(P_3)&=M_o(x,\be,\om,y)=(x|y)_o+(\be|\om)_o-(x|\om)_o-(\be|y)_o\\
b_o(P_4)&=M_o(x,\al,\om,y)=(x|y)_o+(\al|\om)_o-(x|\om)_o-(\al|y)_o.
\end{align*}
Hence
$b_o(P_1)+b_o(P_4)=b_o(P_3)-a_o(P_1)=(x|y)_{A,o}$.
Thus using Lemma~\ref{lem:submoeb_omoeb}, we obtain
$$|b(P_1)+b(P_4)-(x|y)_{A,o}|\le 20h\quad\text{for}\quad y\neq\al$$
and  
$$|b(P_3)-a(P_1)-(x|y)_{A,o}|\le 20h\quad\text{for}\quad y\neq\be,$$
where
$M(P_i)=(a(P_i),b(P_i),c(P_i))$.
It follows that for 
$y\in X_\om$
we have
$$B_{A,t/20h,o}(y)\sub B_{A,t,o}^\al(y)\sub B_{A,20ht,o}(y)\quad\text{for}\quad y\neq\al$$
and
$$B_{A,t/20h,o}(y)\sub B_{A,t,o}^\be(y)\sub B_{A,20ht,o}(y)\quad\text{for}\quad y\neq\be.$$

Let
$B_{A,t,o}(y)$
be a subbase set of the standard topology
with a scale triple
$A=(\al,\be,\om)$, $y\in X_\om$, $t>0$.
By Corollary~\ref{cor:dist_continuity}, the function
$x\mapsto d_{A,o}(x,y)$
is continuous, thus for every
$x\in B_{A,t,o}(y)$
there is
$0<s<t$
such that 
$B_{A,s,o}(x)\sub B_{A,t,o}(y)$.
Since
$B_{A,s/20h,o}^\al(x)\sub B_{A,s,o}(x)$,
we see that every set in
$X$,
which is open in the standard topology, is open in the 
$M$-topology.
By a similar argument the converse is also true and hence the 
$M$-topology
coincides with the standard one.
\end{proof}


\bigskip
\begin{tabbing}

Sergei Buyalo\\

St. Petersburg Dept. of Steklov Math. Institute RAS,\\ 
Fontanka 27, 191023 St. Petersburg, Russia\\
{\tt sbuyalo@pdmi.ras.ru}\\

\end{tabbing}

\end{document}